\newcommand{\eps}{\varepsilon}
\theoremstyle{plain}
\newtheorem{thm}{Theorem}
\newtheorem{lem}{Lemma}[section]
\newtheorem{cor}[lem]{Corollary}
\newtheorem{defi}[lem]{Definition}
\newtheorem{rem}[lem]{Remark}
\newtheorem{prop}[lem]{Proposition}
\definecolor{darkgreen}      {cmyk}{0.90,0.00,0.90,0.10}
\newcommand{\blue}[1]{\textcolor{black}{#1}}
\newcommand{\E}{\mathbb{E}}
\newcommand{\PP}{\mathbb{P}}
\newcommand{\1}{\mathbbm{1}} 
\newcommand{\N}{\mathbb{N}}
\newcommand{\R}{\mathbb{R}}
\newcommand{\EE}[1]{\mathbb{E} \left[ #1 \right]}
\begin{document}
	\title{On the fixation probability of an advantageous allele in a population with skewed offspring distribution}
	\author{Matthias Birkner\thanks{Johannes Gutenberg-Universität Mainz, birkner@mathematik.uni-mainz.de}, Florin Boenkost\thanks{
			University of Vienna, 
			florin.boenkost@univie.ac.at}, \\
		Iulia Dahmer\thanks{Goethe University Frankfurt, dahmer@med.uni-frankfurt.de}, Cornelia Pokalyuk\thanks{{\color{black} Universität zu Lübeck,  cornelia.pokalyuk@uni-luebeck.de}} }
	\date{\today}
	\maketitle
	
	\begin{abstract}
	Consider an advantageous allele that arises in a haploid population of size $N$ evolving in continuous time according to a skewed reproduction mechanism, which generates under neutrality genealogies lying in the domain of attraction of a Beta$(2-\alpha, \alpha)$-coalescent for $\alpha \in (1,2)$. We prove in a setting of moderate selection that the fixation probability $\pi_N$ of the advantageous allele is asymptotically equal to  $\alpha^{1/(\alpha-1)} s_N^{1/(\alpha-1)} $ , where $s_N$ is the selection strength of the advantageous allele. Our proof uses duality with a suitable $\Lambda$-ancestral selection graph.\\

		Key words: fixation probability, skewed offspring distribution, ancestral selection graph,
		duality, equilibrium distribution\\
		
		MSC: Primary: 60J95;  
		Secondary: 60J28, 
		92D15, 
		60J90. 
		\\
	\end{abstract}

	\maketitle

	\section{Introduction}

	The determination of (asymptotic or approximate) formulae for the fixation probabilities of \blue{advantageous}
	alleles is a classical problem in population genetics, see e.g.\ \cite{PW08} for an overview. Seminal formulae have been established by Haldane \cite{H27} and by Kimura \cite{K62}. Both formulae have been shown to be valid for certain population models with selection and offspring distributions in the domain of attraction of the Wright-Fisher diffusion, hence by duality, 
	their genealogies under neutrality converge to Kingman's coalescent, see e.g.\ \cite{K62} and \cite{BoeGoPoWa1, BoeGoPoWa2}.
	
	Here we consider a two-type \blue{(wildtype and advantageous type)} population model \blue{with fixed population size $N$} and a skewed offspring distribution at neutral reproductions, which lies in the domain of attraction of a $\Lambda$-Wright-Fisher diffusion. To this we add moderate selection, i.e.\ genetic drift is considerably weaker than the effect of selection. In our setting the corresponding dual ancestral selection process has an asymptotically Beta$(2-\alpha, \alpha)$-like coalescence structure with $1 < \alpha < 2$ and an asymptotically linear, binary branching component.
	We show that the fixation probability $\pi_N$ of the advantageous allele \blue{when starting from a single copy} is asymptotically equal to  $\alpha^{1/(\alpha-1)} s_N^{1/(\alpha-1)}$, where $s_N \sim N^{-b}$ is the selection strength of the advantageous allele with $0< b<\alpha-1$.
	
	A similar scenario has also been investigated in \cite{Okada2021}. Okada and Hallatschek derive in their paper basically the same asymptotic formula (up to a constant) based on heuristic arguments. Here we give a rigorous proof of this asymptotics including the prefactor.
	Our proof is based on a duality argument, a method that has been proven to be useful for this type of question, see e.g. \cite{Krone1997}, \cite{M09},  \cite{BoeGoPoWa1}.
	
	The investigation of the interplay between selection and skewed offspring distributions is  biologically highly relevant as well as mathematically intriguing.   Skewed offspring distributions are believed to be common especially among marine populations and to play ``a major role in shaping marine biodiversity'' \cite{HedgecockPudovkin2011}, p. 971. Furthermore, also for rapidly adapting populations as well as for populations subject to seasonal reproduction cycles it has been argued that multiple merger coalescents provide a reasonable null-model for the genealogy of the population, see e.g. \cite{NH13} and \cite{CGSW22}. Empirical evidence for the relevance of multiple merger coalescents has been found in genomic data from various species, e.g.\  
	\emph{Atlantic cod} \cite{AKHE23},   
	\emph{Japanese sardine} \cite{NNY16}, 
	\emph{Mycobacterium tuberculosis} 
	\cite{MGF20} and \emph{Influenza} 
	\cite{SL12}. 
	
	Neutral population models with such genealogies have been intensively studied, see e.g.\ \cite{BBCEMSW05}, \cite{HM13}, \cite{HM22}, \cite{S03}, \cite{KW21, Freund21, BB21}. 
	Models that investigate the joint effect of selection and skewed offspring distributions have been analysed by various authors.
	One of the first models in this context was proposed and  investigated, in particular with respect to duality, by Etheridge, Griffiths and Taylor in \cite{EGT10}.
	Foucard \cite{F13}, Griffiths \cite{G14} as well as Bah and Pardoux \cite{BP15} consider $\Lambda$-Wright-Fisher models and $\Lambda$-lookdown models, respectively. They derive criteria for which fixation of the wildtype allele and  \blue{ of the advantageous}
	allele, respectively, is possible.  
	In \cite{CHV23} Cordero, Hummel and V\'{e}chambre study a class of $\Lambda$-Wright–Fisher processes with frequency-dependent and environmental selection. They show that in this class of models fixation of one type is not always sure, but coexistence of both types is possible.

	In \cite{GKP23} a general population model is analysed, in which individuals can reproduce neutrally and selectively according to skewed offspring distributions. In particular, the authors derive a semi-explicit formula for the fixation probability of a beneficial allele.
	Birkner, Dahmer and Eldon  consider in \cite{BDE23+} a related population model with selection and skewed offspring distribution evolving in discrete generations. It is a weighted version of the reproduction model in \cite{S03}, which involves free production of juveniles followed by population size regulation via (weighted) sampling without replacement from the juvenile pool. They arrive at the same asymptotic formula for the fixation probability as we do (with a  constant specific to their model).
	\blue{See also Section~\ref{Related work} below for more discussion.}
	
	\subsection{Population model and main result}
	We consider a population of fixed size $N \in \mathbb{N}$, \blue{where each individual is either of  wildtype or of the advantageous type}. The population is evolving in continuous time according to $\Lambda$-type neutral reproduction events as well as binary selective events. The selective and neutral events are governed by Poisson point processes $P_N$ and $P_S$. Let $\Lambda$ be a finite measure on $(0,1)$ and $c_N>0$. We start with the neutral component of the model. We label the $N$ individuals by ${1,\dots,N}$ and let  $P_N$ be a Poisson point process with intensity $c_N dt \otimes p^{-2} \Lambda(dp)$ on $\R_+ \times (0,1]$. For every point $(t,p)$ in $P_N$ each individual $i$  at time $t$ performs an independent coin toss with success probability $p$. All individuals with a successful coin toss participate in the reproduction event. Among the participating individuals a single individual is chosen uniformly at random to reproduce and replace with its offspring all other participating individuals. Note that this includes the possibility that only one or no individual participates in a reproduction event, resulting in `silent' events which have no effect on the frequency nor on the genealogy. \blue{Non-silent events occur at a finite rate, see \eqref{eq:neutral1} below, so the dynamics is well defined.} We will refer to the events associated to $P_N$ as \emph{neutral events}. Note that the neutral component of the model alone (obtained by setting $s_N=0$ below) is a variation of the `general Moran model' considered in \cite[Sect.~1.2.3]{BB09} and is also a time-continuous version of a special case of the class of models considered in \cite{HM13}, see also the discussion in \blue{ Subsection~\ref{sec:neutral}} below.
	
	In order to add selection we define $P_S$ to be a Poisson point process on $\R_+$ with intensity $N s_N dt$. For each point in $P_S$, we sample uniformly at random an individual $i$. Whenever $i$ is of the \blue{wildtype} 
	nothing happens, whenever $i$ is of the \blue{advantageous} 
	type, $i$ reproduces in the sense that an uniformly chosen individual $j$ is replaced by a child of individual $i$, in particular it receives the (\blue{advantageous}) type of $i$. Similarly, we will refer to the events induced by $P_S$ as \emph{selective events}.

	\begin{figure}[h]
		\begin{center}
			\begin{tikzpicture}[x=1mm,scale=1.2,>=stealth]
				\foreach \x in {0,1,2,3,4} \draw[-] (-40,\x) -- (40,\x);
				\foreach \x in {1,2,3,4,5} \node[left] at (-40,\x-1) {\x};
				\draw [->,thick] (-30,2) -- (-30,3);
				\draw [->,thick, bend angle=30, bend left, blue] (-30,2) to (-30,4);
				\draw[-,thick, blue] (-30,4) -- (-17,4);
				\draw [->,thick] (-25,2) -- (-25,1);
				\draw [->,thick,  bend angle=20, bend right] (-17,0) to (-17,4);
				\draw [->,thick, bend angle=45, bend left] (-17,0) to (-17,2);
				\draw [->,thick, bend angle=30, bend left] (-17,0) to (-17,1);
				\draw [->,thick, bend angle=20, bend right] (-7,4) to (-7,2);
				\draw [->,thick, bend angle=20, bend left] (-7,4) to (-7,1);
				\draw [->,thick] (0,0) -- (0,1);
				\draw [->,dashed,thick] (-35,4) -- (-35,2);
				\draw [->,dashed,thick] (15,0) -- (15,2);
				\draw[-,thick, blue] (-40,2) -- (-35,2);
				\draw[-,blue, thick] (-40,2) -- (-30,2);
				\draw[-,blue, thick] (-30,3) -- (-25,3);
				\draw[-,blue, thick] (-30,2) -- (-25,2);
				\draw [->,blue,thick] (-30,2) -- (-30,3);
				\foreach \x in {1,2,3}	\draw[-,blue,thick] (-25,\x) -- (-17, \x);
				\draw [-,blue,thick] (-17,3) -- (-7,3);
				\draw [->,blue,thick] (-25,2) -- (-25,1);
				\draw [->,blue,thick, bend angle=20, bend left] (23,3) to (23,4);
				\draw [->,blue,thick, bend angle=20, bend right] (23,3) to (23,2);
				\draw[-,blue,thick] (-7,3) -- (30, 3);
				\draw[-,blue,thick] (23,4) -- (30,4);
				\foreach \x in {1,3,4}	\draw[-,blue,thick] (30,\x) -- (40, \x);
				\draw [->,dashed,blue, thick] (30,3) -- (30,1);
				\draw [-,thick] (-35,-0.6) -- (-35,-0.4);
				\node at (-35, -0.8){$t_0$};
				\draw [-,thick] (30,-0.6) -- (30,-0.4);
				\draw [-,thick] (-17,2) -- (23,2);
				\draw [-,thick, blue] (23,2) -- (40,2);  
				\node at (30, -0.8){$t_1$};
				\draw [->,thick] (-45,-0.5) -- (45,-0.5);
			\end{tikzpicture}
		\end{center}
		\caption{Selective arrows are depicted as dashed arrows. Individual~$3$ is of the \blue{advantageous} 
			type, shown in blue. Hence, individual $3$ is not replaced at time $t_0$ since individual $5$ is of wildtype. At time $t_1$ the \blue{advantageous} 
			descendant (individual $4$) of individual $3$ is able to use the selective arrow.}
		\label{Figure Moran 2}
	\end{figure}
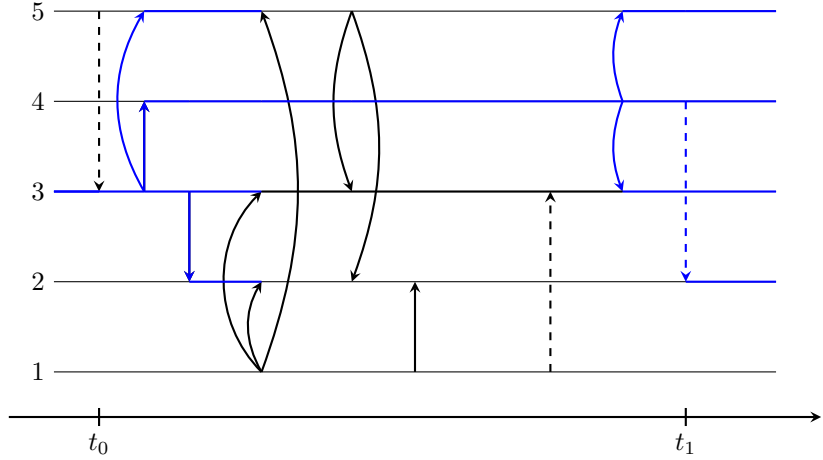
	
	In the following we  will consider $1<\alpha<2$ and measures $\Lambda$ of the form
	\begin{equation}
		\label{Beta(2-alpha,alpha)}
		\Lambda(dp) = \frac{1}{\Gamma(2-\alpha)\Gamma(\alpha)} p^{1-\alpha}(1-p)^{\alpha-1} \1_{(0,1)}(p) dp,
	\end{equation}
	i.e.\ $\Lambda = \mathrm{Beta}(2-\alpha,\alpha)$ and 
	\begin{equation}
		\label{choice c_N}
		c_N\sim (\alpha-1)\Gamma(\alpha) N^{1-\alpha}, \quad \text{as } N \to \infty,
	\end{equation}
	where $f_N\sim g_N$ is the shorthand notation for $f_N/g_N \to 1 $ as $N \to \infty$. The precise choice of scaling for $c_N$ is explained in \blue{Subsection~\ref{sec:neutral}}. Briefly speaking, $c_N$ is chosen such that each individual dies asymptotically at rate 1. \blue{The choice \eqref{Beta(2-alpha,alpha)} for $\Lambda$ is  prototypical in the context of offspring laws with infinite variance (see also \eqref{p_k tail} in Section~\ref{sec:neutral}) and makes explicit calculations easier, we refer to Section~\ref{sec:outlook} for background and generalisations.}
	
	From now on we consider a regime of \emph{moderate selection}, i.e. we assume
	\begin{align}\label{conditions_s_b_alpha}
		s_N \sim N^{-b},
	\end{align}
	for some $0<b<\alpha-1$, see \blue{Subsection~\ref{sec: moderate selection} } below for interpretation.
	
	Let $(X^{(N)}_t)_{t\geq 0}$ denote the number of wildtype individuals in the just defined model at time $t$ and set
	\begin{align}
		\pi_N := \lim_{t \to \infty} \PP_{N-1}(X^{(N)}_t = 0)
	\end{align}
	the probability of fixation of a single \blue{advantageous} 
	mutant \blue{(here, $\PP_{N-1}$ refers to starting from $X^{(N)}_0=N-1$)}. We are now ready to state our main theorem.
	
	\begin{thm}\label{Theorem continous}
		Assume $0< b < \alpha-1$ and let $\Lambda(dx)$ be a Beta distribution with parameters $(2-\alpha,\alpha)$ for some $\alpha \in (1,2)$.  Then it holds that
		\begin{align} \label{eq:survival prob}
			\pi_N \sim \alpha^{\frac{1}{\alpha-1}} N^{\frac{-b}{\alpha-1}} = \left( \alpha s_N\right)^{\frac{1}{\alpha-1}} , \text{ as } N \to \infty.  
		\end{align}
	\end{thm}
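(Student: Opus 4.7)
The plan is to construct the $\Lambda$-ancestral selection graph (ASG) of the entire $N$-individual population from some forward time $t>0$, read off a moment duality, and then asymptotically analyze the equilibrium distribution of the backward lineage-counting process. Going backwards in time: each neutral event of $P_N$ with parameter $p$ causes all lineages whose independent $p$-coin comes up heads to coalesce into one parent lineage; each selective event attaches a fresh ``checking'' lineage at a uniformly chosen position. Because a forward selective event at $(i,j)$ results in $j$ being beneficial iff at least one of $i,j$ was beneficial before, iterating this OR rule through the ASG shows that an individual at time $t$ is wildtype iff \emph{all} leaves of its ASG at forward time $0$ are wildtype. Writing $U_t \subset \{1,\ldots, N\}$ for the union of ASG-leaves of the whole population and using that the unique initial beneficial sits at a uniformly random position independent of the type-blind ASG, exchangeability yields the exact identity
\begin{equation}
\PP_{N-1}\bigl(X_t^{(N)}=N\bigr) = \EE{(N - |U_t|)/N},
\end{equation}
and passing to $t\to\infty$ gives $\pi_N = \EE{V_\infty^{(N)}}/N$, where $V_\infty^{(N)}$ denotes the equilibrium number of ASG lineages.

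The remaining task is the asymptotic computation of $\EE{V_\infty^{(N)}}$. At value $k$, the instantaneous drift of the lineage-counting process equals the difference of a branching drift $+ks_N$ and a coalescence drift, the latter being
\begin{equation}
c_N \sum_{j=2}^{k} \binom{k}{j}(j-1)\lambda_{k,j} \;=\; c_N \int_0^1 \frac{kp - 1 + (1-p)^k}{p^2}\, \Lambda(dp),
\end{equation}
with $\lambda_{k,j}$ denoting the per-subset rate of $j$-fold mergers. With $\Lambda = \mathrm{Beta}(2-\alpha,\alpha)$ and the substitution $p = u/k$, this integral is asymptotic, for $k\to\infty$, to $k^\alpha/(\alpha(\alpha-1)\Gamma(\alpha))$, so, using $c_N \sim (\alpha-1)\Gamma(\alpha) N^{1-\alpha}$, the coalescence drift is asymptotically $N^{1-\alpha}k^\alpha/\alpha$. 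Setting branching and coalescence drifts equal yields the deterministic balance point
\begin{equation}
k_N^\ast \;=\; \alpha^{1/(\alpha-1)}\, N^{1-b/(\alpha-1)} \;=\; (\alpha s_N)^{1/(\alpha-1)}\, N,
\end{equation}
whose value divided by $N$ is exactly the asymptotic right-hand side of the theorem.

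The hard step is turning this heuristic balance into the rigorous statement $\EE{V_\infty^{(N)}}/k_N^\ast \to 1$. The natural route is to choose a time rescaling $\beta_N$ such that $\widetilde V_s^{(N)} := V_{s/\beta_N}^{(N)}/k_N^\ast$ has a nontrivial limit as $N\to\infty$: one would show generator convergence to that of an ODE (or low-noise diffusion) with a globally attracting fixed point at $1$, and combine tightness of the stationary distributions with uniqueness of the limit's equilibrium to get $\widetilde V_\infty^{(N)} \Rightarrow \delta_1$. Upgrading this to convergence of means requires uniform integrability, which can be secured by a Foster/Lyapunov argument based on $f(k)=k^2$: since $k\cdot\mathrm{drift}(k)$ is strongly negative for $k \gg k_N^\ast$, the equilibrium second moment is $O((k_N^\ast)^2)$. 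Non-trivial technicalities include controlling sub-leading corrections of order $k^{\alpha-1}$ in the asymptotic coalescence drift, handling the low-$k$ regime where the $k^\alpha$-asymptotic breaks down (but whose equilibrium mass is negligible because $k_N^\ast\to\infty$), and producing matching upper and lower bounds on $\EE{V_\infty^{(N)}}$ so that one obtains equality, not merely one inequality, in the limit. Combined with the duality identity above, this yields $\pi_N \sim (\alpha s_N)^{1/(\alpha-1)}$ as claimed.
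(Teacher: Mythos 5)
Your duality step and your drift computation are both correct and coincide with the paper's: the identity $\pi_N=\EE{V_\infty^{(N)}}/N$ is exactly Corollary~\ref{cor:survival prob} (the $k=N-1$ case of the hypergeometric duality), and your coalescence drift $\sim N^{1-\alpha}k^\alpha/\alpha$ and balance point $k_N^\ast=(\alpha s_N)^{1/(\alpha-1)}N$ match Lemma~\ref{Lem exact form of the generator} and $d_N$ in \eqref{asym_dN}. Where you diverge is the rigorous step. The paper does \emph{not} pass to a scaling limit of the rescaled process: it exploits finiteness of the state space to get $\pi^{(N)}Q^{(N)}g^{(N)}=0$ for any $g^{(N)}$, so that $\EE{A_{eq}^{(N)}}\le c^{(N)}$ (resp.\ $\ge$) whenever $L^{(N)}g^{(N)}(x)+x\le c^{(N)}$ (resp.\ $\ge$) uniformly in $x\in[N]$. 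A linear test function gives the upper bound; a tuned combination $a_1x^{\beta_1}+a_2x^{\beta_2}$ with $0<\beta_1<\beta_2<\alpha-1$ gives the lower bound. This sidesteps tightness, identification of limiting invariant measures, and uniform integrability altogether.

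The genuine gap in your route is the lower bound, i.e.\ ruling out that the stationary law puts non-negligible mass on states $k=O(1)$ or $k\ll k_N^\ast$. Your limiting dynamics $\dot v=\mathrm{const}\cdot(v-v^\alpha)$ has \emph{two} fixed points, $0$ and $1$, and every convex combination of $\delta_0$ and $\delta_1$ is invariant for the flow; since the states $k=1,2,\dots$ all collapse to $v=0$ under the rescaling by $k_N^\ast$, generator convergence plus tightness can at best tell you that subsequential limits of the stationary laws are supported on $\{0,1\}$, not that the mass at $0$ vanishes. This is not a formality: starting from $k=1$ the process needs time of order $s_N^{-1}\log k_N^\ast$ to reach the balance point, which exceeds your relaxation time scale $\beta_N^{-1}=s_N^{-1}$ by a diverging factor, so the smallness of the equilibrium mass near $k=O(1)$ does not follow ``because $k_N^\ast\to\infty$'' --- that assertion is circular. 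Controlling exactly this is the hardest part of the paper (Lemma~\ref{lem:lower bound gen}, Cases (iii) and (iv), where the branching term $a_1 s_N\beta_1 x^{\beta_1}$ must be shown to dominate and to exceed $d_N$ already at $x=1$, which forces the choice $a_1\sim d_N/((2^{\beta_1}-1)s_N)$ and the compensating second power $\beta_2$). Your Foster--Lyapunov argument with $f(k)=k^2$ addresses only the upper tail (uniform integrability), not this lower-tail issue, so as written your plan delivers $\limsup_N \EE{V_\infty^{(N)}}/k_N^\ast\le 1$ but not the matching $\liminf$.
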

	
	We give a sketch of the proof of Theorem~\ref{Theorem continous} \blue {in Section~\ref{sec:sketch of proof}} and rigorously prove it in \blue{Section~\ref{Sec:Proof}}. Furthermore, we discuss how our result connects to the fixation probabilities in the boundary cases $\alpha=2$ and $\alpha=1$ in \blue{Section~\ref{sec:outlook}}. 
	
	\begin{rem}\label{Remark:FixationProbability}
		\begin{itemize} 
			\item One could easily modify the selective strength in \eqref{conditions_s_b_alpha} by a constant $c_\mathrm{sel}$, i.e.\ use $s_N \sim c_\mathrm{sel} N^{-b}$, and obtain the same asymptotic behaviour for $\pi_N$ in \eqref{eq:survival prob} with $(\alpha c_\mathrm{sel} )^{\frac{1}{\alpha-1}}$ as the new constant.
			\item Note that smaller $\alpha$ reduces the probability of fixation. This is very plausible, since for smaller $\alpha$ neutral fluctuations in the frequency path are stronger.
		\end{itemize}
	\end{rem}

	\subsection{Choices of parameter scales}
	
	\subsubsection{Time scale and the size of neutral events} {\label{sec:neutral}}
	Consider a Poisson point process $\widetilde{P}_N$ on $\R_+ \times (0,1]$
	with intensity measure $dt \otimes p^{-2} \Lambda(dp)$ (which coincides with $P_N$ up to a
	re-scaling of time), and use the points in $\widetilde{P}_N$ to generate neutral reproduction
	events as above. Then, for a fixed individual among the $N$ many, the rate at which it participates in some non-trivial event (i.e., its own $p$-coin toss is successful and there is at least one other individual whose $p$-coin toss is successful) is given by
	\begin{align}
		\label{eq:neutral1}
		\widetilde{c}_N =
		\int_{(0,1]} p \big( 1 - (1-p)^{N-1}\big) \frac{1}{p^2} \Lambda(dp)
		\sim \frac{1}{(\alpha-1) \Gamma(\alpha)} N^{\alpha-1},
		\quad \text{as } N\to\infty .
	\end{align}
	Hence, the prefactor $c_N$ $(\sim 1/\widetilde{c}_N$, which is of order  $N^{1-\alpha}$) in the intensity measure of the
	Poisson point process $P_N$ ensures that any given individual dies due to
	a `neutral event' with (an approximately) constant rate,
	irrespective of the population size $N \gg 1$.
	
	Furthermore, given that the focal individual participates in the
	next neutral event, the conditional probability that the focal
	individual becomes therein the parent of $k \in \{2,3,\dots,N\}$
	offspring is given by
	\begin{align}
		\label{eq:neutral.k}
		& \widetilde{p}_k^{(N)} := \frac{1}{\widetilde{c}_N} 
		\frac{1}{k} \binom{N-1}{k-1} \int_{(0,1]} p \, p^{k-1} (1-p)^{N-k}\frac{1}{p^2} \Lambda(dp) \notag \\
		& \quad = \frac{1}{\widetilde{c}_N}
		\frac{1}{\Gamma(2-\alpha)\Gamma(\alpha)} \frac{\Gamma(N-k+\alpha)}{\Gamma(N-k+1)}
		\frac{\Gamma(k-\alpha)}{\Gamma(k+1)}
		\: \longrightarrow \: \frac{\alpha-1}{\Gamma(2-\alpha)} \frac{\Gamma(k-\alpha)}{\Gamma(k+1)}
		=: \widetilde{p}_k,
		\quad \text{as } N\to\infty,
	\end{align}
	and the conditional probability that the focal individual is not
	chosen as the parent and hence leaves $0$ offspring is given by
	\begin{align}
		\label{eq:neutral.0}
		& \widetilde{p}_0^{(N)} := \frac{1}{\widetilde{c}_N} \sum_{k=2}^N 
		\frac{k-1}{k} \binom{N-1}{k-1} \int_{(0,1]} p \, p^{k-1} (1-p)^{N-k}\frac{1}{p^2} \Lambda(dp)
		\: \longrightarrow \: \frac{1}{\alpha} =: \widetilde{p}_0 \quad \text{as } N\to\infty.
	\end{align}
	We refer to Section~\ref{sect:proofs.rem:neutral} for proofs of
	these formulas.
	
	Thus, the population model we consider here has the following
	equivalent description (in law): For $k=2,3,\dots,N$, each
	individual dies and is replaced by $k$ children with rate
	$\blue{c_N} \widetilde{c}_N \widetilde{p}_k^{(N)}$
	($\sim \widetilde{p}_k$ \blue{as $N\to\infty$}); in order to make
	room for these children, $k-1$ other individuals are drawn
	uniformly without replacement and die as well. In addition, each
	individual of the \blue{advantageous} 
	type produces a single child at rate
	$s_N$ which replaces one uniformly chosen individual (possibly the
	parent itself).
	\smallskip
	
	We remark that the probability law with weights
	$(\widetilde{p}_k)_{k \in \N_0}$ (with $\widetilde{p}_1 := 0$)
	given in \eqref{eq:neutral.k}--\eqref{eq:neutral.0} has generating
	function
	\begin{align}
		\label{eq:genfcttilde}
		\widetilde{f}(z) = \frac{1}{\alpha}
		+ \sum_{k=2}^\infty \frac{\alpha-1}{\Gamma(2-\alpha)} \frac{\Gamma(k-\alpha)}{\Gamma(k+1)} z^k
		= \frac{1}{\alpha} (1-z)^\alpha + z,
	\end{align}
	where we used
	$$(1-z)^\alpha = 1 - \alpha z + \frac{\alpha(\alpha-1)}{\Gamma(2-\alpha)}
	\sum_{k=2}^\infty \frac{\Gamma(k-\alpha)}{\Gamma(k+1)} z^k.$$ In
	particular its mean is $\widetilde{f}'(1)=1$; we see e.g.\ from the
	explicit formula in \eqref{eq:neutral.k} that
	\begin{equation}
		\label{p_k tail}
		p_k \sim ((\alpha-1)/\Gamma(2-\alpha)) k^{-1-\alpha}, 
		\quad \text{as $k\to\infty$}
	\end{equation} so that it lies in the domain of attraction of a
	one-sided stable law with index $\alpha$, in particular, the
	variance is infinite.  Let us mention that the appearance of the law
	\eqref{eq:genfcttilde}, or a close relative like a version
	conditioned to be positive, is not at all surprising in the context
	of $\mathrm{Beta}(2-\alpha,\alpha)$-coalescents, see e.g.\
	\cite{K12, GY07}.
	
	\subsubsection{Selection of moderate strength} \label{sec: moderate selection}
	We regard the selection strength \eqref{conditions_s_b_alpha} as moderate, because for these values of $b$ we have $c_N \ll s_N\ll 1$. If $b=0$, selection is independent of $N$. This regime is generally called a regime of strong selection. When $c_N \in \Theta(s_N)$\blue{, i.e.\ $s_N \in O(c_N)$ and $c_N \in O(s_N)$ both hold,} selection and coalescence act on the same time scale. In the context of a Wright-Fisher diffusion, i.e. in the case $c_N = \frac{1}{N}$ and $s_N = \frac{\gamma}{N}$ for some $\gamma>0$, one then usually speaks of weak selection.  As the selection strength we are considering lies in between these two regimes, it is reasonable to speak of moderate selection.

	The case of skewed offspring distributions with weak selection, where in the scaling limit a $\Lambda$-Wright-Fisher process arises, has been studied     by various authors, see e.g. \cite{G14, F13, BP15}.
	
	\subsection{Duality}
	A key ingredient of the proof of the main result is the definition of an ancestral selection process $(A_\tau^{(N)})_{\tau\geq 0}$ that is dual to $(X_t^{(N)})_{t\geq 0}$. We express the fixation probability in terms of the expectation of the stationary distribution of this ancestral process. Hence, an analysis of this expectation yields the proof of Theorem \ref{Theorem continous}.

	The $\Lambda$-ancestral selection process $(A_\tau^{(N)})_{\tau \geq 0}$ counts the number of potential ancestors at (backward) time $\tau$ of a sample taken at time 0. It has the same transition rates as a suitably time rescaled block counting process of a $\Lambda$-coalescent with an additional branching mechanism. 
	\begin{defi}[$\Lambda$-ASP]\label{Def ASG}
		We define $A^{(N)}:=(A_\tau^{(N)})_{\tau \geq 0}$ with $A_0=n \in [N]$ to be the Markov process with state space $[N] \blue{:=\{1,\dots,N\} }$ and transition rates $r_{n,m}$ from $n$ to $m$:
		\begin{align} \label{eq:transition ASP}
			r_{n,n+1} &= n s_N (1- \frac{n}{N}), \qquad &n \in \{1,...,N-1\}\\
			r_{n, j} &= c_N q(n,j),  \, &j  \in \{1,...,n-1\}
		\end{align}
		with $c_N  \sim  (\alpha-1) \Gamma(\alpha) N^{1-\alpha} $ and 
		\begin{equation}\label{jump rates q}
			q(n,j) = \binom{n}{n-j+1} \int_{0}^1 x^{n-j-1} (1-x)^{j-1} \Lambda (dx).
		\end{equation}
		
	\end{defi}
	
	The $\Lambda$-ASP is the block counting process corresponding to the $\Lambda$-ancestral selection graph that one obtains informally by reversing time in Figure~\ref{Figure Moran 2}. \blue{Let us mention here the related works \cite{CV23, GKP23}, where analogously defined $\Lambda$-ancestral selection graphs in different settings are considered.}

	Note that the jump rates \blue{$r_{n,j}$} from $n$ to $j$ of the $\Lambda$-ASP are given by the respective time changed \blue{(through multiplication with $c_N$)} rates of the block counting process of the $\Lambda$-coalescent (i.e.\ the rate at which some $(n-j+1)$-merger occurs when there are presently $n$ blocks). The practical interest in defining the $\Lambda$-ASP lies in the following sampling duality. It has been used widely in the literature, in particular also to determine (asymptotic) expressions for the fixation probability. Basically it states that  all individuals in a sample have the wildtype iff all potential ancestors of these individuals have the wildtype.
	
	\begin{prop}[Duality] \label{thm:duality}
		Let $k,n \leq N$ and $t>0$. The following hypergeometric duality holds
		\begin{align}
			\E\left[\frac{X_t^{(N)}(X_t^{(N)}-1)\cdots(X_t^{(N)}-(n-1))}{N(N-1)\cdots(N-(n-1))}\Big | X_0^{(N)}=k\right] \notag \\ =\mathbb E\left[\frac{k(k-1)\cdots(k-(A^N_t-1))}{N(N-1)\cdots(N-(A_t^N-1))}\Big | A_0^N=n\right], \label{eq:duality}
		\end{align}
		where the transition rates of $A^{(N)}$ are given in Definition \ref{Def ASG}.
	\end{prop}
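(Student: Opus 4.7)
My plan is to prove \eqref{eq:duality} by the generator comparison method applied to the hypergeometric sampling function
\begin{equation}
H(x,n) \;:=\; \frac{x(x-1)\cdots(x-n+1)}{N(N-1)\cdots(N-n+1)} \;=\; \frac{\binom{x}{n}}{\binom{N}{n}}, \qquad x,n\in\{0,1,\dots,N\},
\end{equation}
with the convention $H(x,0)=1$. Both sides of \eqref{eq:duality} then read $\E[H(X_t^{(N)},n)\mid X_0^{(N)}=k]$ and $\E[H(k,A_t^{(N)})\mid A_0^{(N)}=n]$, so by the standard generator-duality criterion for pure-jump Markov chains on a finite state space it suffices to verify
\begin{equation}
\mathcal L_X H(\cdot,n)(x) \;=\; \mathcal L_A H(x,\cdot)(n) \qquad \text{for all } x,n\in\{0,\dots,N\},
\end{equation}
where $\mathcal L_X$ and $\mathcal L_A$ are the generators of $X^{(N)}$ and $A^{(N)}$, respectively.

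For the selective contribution, $X^{(N)}$ decreases $x\mapsto x-1$ at rate $s_N x(1-x/N)$ (the sampled potential parent must be of the beneficial type and the replaced individual of the wildtype). Using the elementary identities $x\binom{x-1}{n-1}=n\binom{x}{n}$ and $(n+1)\binom{x}{n+1}=(x-n)\binom{x}{n}$, both the forward term $s_N x(1-x/N)\,[H(x-1,n)-H(x,n)]$ and the backward term $n s_N(1-n/N)\,[H(x,n+1)-H(x,n)]$ simplify to $-s_N\,\tfrac{n(N-x)}{N}\,H(x,n)$, so they agree.

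For the neutral contribution, the key observation is the following. At a reproduction event with parameter $p$, each individual---and in particular each of the $n$ sampled ones---lies in the participating set $S$ independently with probability $p$. Conditioned on $|S\cap\mathrm{sample}|=k\ge 1$, the $k$ sampled participants carry after the event the common type of the parent, which is distinct from the $n-k$ non-participating sampled indices (since the parent lies in $S$). Consequently ``sample all wildtype post-event'' coincides with ``the $n-k$ non-participants together with the parent---totalling $n-k+1$ distinct pre-event indices---are all of wildtype'', an event of probability $H(x,n-k+1)$ by exchangeability of the type configuration given $x$. Both the $k=0$ and $k=1$ cases contribute zero to the generator (trivially for $k=0$; for $k=1$ because $n-1+1=n$), so
\begin{equation}
\mathcal L_X^{\mathrm{neut}} H(\cdot,n)(x) \;=\; c_N \sum_{k=2}^n \binom{n}{k}\!\int_0^1\! p^{k-2}(1-p)^{n-k}\Lambda(dp)\,[H(x,n-k+1)-H(x,n)].
\end{equation}
Reindexing via $j:=n-k+1\in\{1,\dots,n-1\}$ identifies the right-hand side with $\sum_{j=1}^{n-1}c_N\,q(n,j)[H(x,j)-H(x,n)]=\mathcal L_A^{\mathrm{coal}} H(x,\cdot)(n)$ by \eqref{jump rates q}. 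The main technical step is thus the neutral calculation---one has to track carefully that the parent and the non-participating sampled indices are pairwise distinct (so that exchangeability yields the hypergeometric probability) and account for the cancellation of the $k=0,1$ terms against the $-H(x,n)$ drift---after which the proposition follows from the standard semigroup criterion.
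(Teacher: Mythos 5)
Your proof is correct, but it takes a different route from the paper's. The paper argues pathwise via the ancestral selection graph: reversing the arrows of the forward construction, it identifies the set of potential ancestors of an $n$-sample, observes that the sample is all wildtype at time $t$ iff all potential ancestors at time $0$ are wildtype, and then simply checks that the branching and merging rates of the block-counting process of this graph are the $r_{n,n+1}$ and $r_{n,j}$ of Definition~\ref{Def ASG}; the hypergeometric factor appears because the $A_t^{(N)}$ potential ancestors form (by exchangeability) a uniform sample without replacement from a population with $k$ wildtypes. You instead verify the algebraic identity $\mathcal L_X H(\cdot,n)(x)=\mathcal L_A H(x,\cdot)(n)$ for $H(x,n)=\binom{x}{n}/\binom{N}{n}$ and invoke the standard generator criterion for duality on a finite state space. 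Your selective computation ($x\binom{x-1}{n-1}=n\binom{x}{n}$ versus $H(x,n+1)=\tfrac{x-n}{N-n}H(x,n)$, both yielding $-s_N n\tfrac{N-x}{N}H(x,n)$) and your neutral computation (the parent is a participant, hence distinct from the $n-k$ non-participating sampled indices, giving $H(x,n-k+1)$ with the $k=0,1$ terms vanishing, and the reindexing $j=n-k+1$ recovering $q(n,j)$) are both accurate; the implicit use of exchangeability of the type configuration given $x$ to reduce a fresh post-event sample to a fixed pre-event sample is legitimate since the transition rates of the lumped chain $X^{(N)}$ may be computed from any representative configuration. The trade-off: the paper's graphical argument is shorter and yields a pathwise (strong) duality, but it leans on the informal construction of the ASG; your generator verification is self-contained and purely computational, at the cost of not exhibiting the ancestral structure that motivates the definition of $A^{(N)}$ in the first place. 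Both establish exactly the same rate identities, so either is acceptable here.
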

	The most relevant case  of Proposition $\ref{thm:duality}$ for our analysis is the case $k=N-1$, i.e. at time 0 in the population a single individual has the \blue{advantageous} 
	type. A simple consequence of Proposition \ref{thm:duality} is the following corollary.
	\begin{cor}\label{cor:survival prob}
		Denote by $A_{eq}^{(N)}$ a random variable whose distribution is given as the stationary distribution of $A^{(N)}$. Then the following holds
		\begin{align}
			\pi_N = \frac{\EE{A_{eq}^{(N)}}}{N}. \label{eq:expression for the surv prob}
		\end{align}
	\end{cor}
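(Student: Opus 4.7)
The plan is to apply the duality in Proposition~\ref{thm:duality} at $k = N-1$ (starting from a single beneficial mutant) and $n = 1$ (a sample of size one on the dual side), and then let $t \to \infty$.

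The first step is a pleasant algebraic simplification. At $n=1$ the left-hand side of \eqref{eq:duality} is simply $\E[X_t^{(N)}/N \mid X_0^{(N)} = N-1]$. At $k = N-1$ the descending product in the numerator of the right-hand side telescopes against the denominator: writing $a = A_t^{(N)}$, the ratio $(N-1)(N-2)\cdots(N-a) / [N(N-1)\cdots(N-a+1)]$ collapses to $(N-a)/N$, since all but the leading factor of the denominator and the trailing factor of the numerator cancel. Hence the duality reduces to
\[
\E\!\left[\frac{X_t^{(N)}}{N}\,\bigg|\, X_0^{(N)} = N-1\right] = 1 - \frac{\E[A_t^{(N)} \mid A_0^{(N)} = 1]}{N}.
\]

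Next I would pass to $t \to \infty$ on both sides. On the left, $(X_t^{(N)})$ is a finite-state continuous-time Markov chain whose only absorbing states are $0$ and $N$, and it is absorbed almost surely in finite time; bounded convergence then gives the limit $1 - \pi_N$. On the right, the transition rates \eqref{eq:transition ASP} make $A^{(N)}$ a finite irreducible Markov chain on $[N]$: every $n < N$ has a positive upward rate $n s_N(1-n/N)$, and every $n \geq 2$ has a positive rate $c_N q(n,1)$ to state $1$, so all states communicate. Consequently $A_t^{(N)}$ converges in distribution to $A_{eq}^{(N)}$ as $t \to \infty$, and by boundedness also in mean.

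Equating the two limits yields $1 - \pi_N = 1 - \E[A_{eq}^{(N)}]/N$, which is \eqref{eq:expression for the surv prob}. There is no real obstacle: the corollary is essentially bookkeeping once one notices that the hypergeometric duality function collapses to the linear function $(N-a)/N$ at the chosen specialization, after which standard finite-state Markov chain convergence completes the argument.
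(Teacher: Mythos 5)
Your proof is correct and follows the same route as the paper: apply the duality of Proposition~\ref{thm:duality} with $k=N-1$, note that the duality function collapses to $1-A_t^{(N)}/N$, and pass to the limit $t\to\infty$ using convergence of $A^{(N)}$ to its stationary distribution and almost-sure absorption of $X^{(N)}$. The paper states this in one line; your version merely makes the telescoping of the hypergeometric factor and the irreducibility of $A^{(N)}$ explicit, which is all to the good.
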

	\begin{proof}
		Apply Proposition \ref{thm:duality} with $k=N- 1$, \blue{$n=N$ (noting that then $X_t^{(N)} (X_t^{(N)}-1)\cdots(X_t^{(N)}-(n-1)))=N!$ if $X^{(N)}_t=N$ and $0$ otherwise)} and make use of the fact that $A_t^{(N)} \to A_{eq}^{(N)}$ in distribution as $t \to \infty$ together with the dominated convergence theorem, then \eqref{eq:expression for the surv prob} follows.
	\end{proof}
	\begin{proof}[Proof of Proposition \ref{thm:duality}]
		Due to the construction of the $\Lambda$-ASG (see Def.~\ref{Def ASG} and the discussion below) in order to sample only wildtype individuals at time $t$ all ancestors have to be wildtype individuals. By exchangeability it is only important to keep track of the number of potential ancestors. Hence, in order to prove \eqref{eq:duality} it suffices to show that the rates given in the definition match to the rates at which we gain or lose potential ancestors. Assume that the $\Lambda$-ASP consists currently of $n$ lines.  Whenever there is a selective reproduction event (which occurs at rate $s_N N$), with probability $n/N (1-n/N)$ an individual from outside the current $n$ lines is chosen to reproduce with offspring among the $n$ lines, hence we have to follow an additional line and thus the $\Lambda$-ASP branches. This gives the rate $r_{n,n+1}$.
		
		Whenever there is a neutral reproduction event with intensity $p$ at rate $c_N$ all $n$ lines toss a coin, with $n-j+1$ participating with probability $\binom{n}{n-j+1}p^{n-j+1}(1-p)^{j-1}$. All the participating lines will merge resulting in a single line for the $\Lambda$-ASP. Hence we lose $n-j+1$ lines but another one is gained and we need to follow $j$ lines after the event. This gives $r_{n,j}$ for all $j \in {1,\dots,n-1}$.
	\end{proof}
	
	\subsection{Some related models} 
	\label{Related work}
	
	\paragraph{The $\Lambda$-asymmetric Moran model from
		\cite{GKP23}} As already mentioned in the introduction, in
	\cite{GKP23} a related class of models involving multiple merger
	genealogies and two selectively distinguished types is studied. As
	in our paper, the corresponding dual $\Lambda$-ancestral selection
	process plays an important role in \cite{GKP23}. There are however
	important differences. In the notation of \cite{GKP23}, there are
	$N$ individuals which can be of type $\oplus$ (the fitter type) or
	of type $\ominus$ (the less fit type). Reproduction is parametrised
	by two (probability) measures $\Lambda^{\oplus}$ and
	$\Lambda^{\ominus}$ on $[0,1]$, where $\Lambda^{\oplus}$
	stochastically dominates $\Lambda^{\ominus}$. At rate $1$, an
	individual is chosen uniformly at random (irrespective of its type)
	to reproduce; if the reproducing individual is of type $\oplus$, a
	value $y$ is drawn from $\Lambda^{\oplus}$.  Every individual then independently
	dies instantaneously with probability $y$ and is immediately
	replaced by an offspring of the reproducing individual (with type
	$\oplus$).  The procedure is analogous if the reproducing individual
	is of type $\ominus$ except that $y$ is drawn from
	$\Lambda^\ominus$. Thus, the difference in offspring laws between
	the two types can (and generically will be) much greater in this
	model than in our set-up, and the model from \cite{GKP23} gives
	greater freedom to model these differences (a measure on $[0,1]$
	rather than a single rate as in our case). Arguably, both selective
	and neutral reproduction in \cite{GKP23} are `event-based' rather
	than `individual-based': as it stands in \cite[Sect.~1--5]{GKP23},
	the individual offspring law in the model from \cite{GKP23} depends
	on the type and on the population size $N$, the individual litter size will
	always be a (random) non-trivial fraction of the total population
	size $N$. By contrast, our setup is chosen in such a way that the
	individual offspring laws stabilise to a proper probability law on
	$\N$, see \eqref{eq:neutral.k}--\eqref{eq:neutral.0} and the
	discussion around it. In particular, our model is not
	a special case of the model from \cite{GKP23}.
	
	Furthermore, the $\Lambda$-ASP in \cite{GKP23} differs from our
	process in two aspects.  The rate of downward jumps in \cite{GKP23}
	depends on whether the potential parent is among the potential
	ancestors or not. Due to our construction of the model we do not
	need to distinguish these cases, which simplifies the transition
	rates in our case. Furthermore, for our $\Lambda$-ASP the branching
	rate is asymptotically linear in $n$ (if $n\in o(N)$, which is at
	least under stationarity with high probability the case).  This is
	in the setting considered in \cite{GKP23} in general not the
	case. The question analogous to our Theorem~\ref{Theorem continous},
	namely the $N\to\infty$ asymptotics of the fixation probability of a
	single advantageous mutant in a population of size $N$ is not in the
	focus of \cite{GKP23}. (\cite{GKP23} do consider in Sections~6--7 an
	SDE version obtainable as a rescaling of their discrete population
	models and derive in Proposition~7.2 a series representation for the
	fixation probability starting from a fraction $x \in (0,1)$ of fit
	individuals in the limiting SDE model.  It is tempting to insert
	$x=1/N$ there, yet is at least a priori not clear in how far this
	reflects the behaviour of the pre-limiting models starting from a
	single fit individual.)
	
	Given the fact that \cite{GKP23} as well as the present paper use
	population models which are in sampling duality with certain
	branching-coalescing processes, it is conceivable that the techniques
	of the present paper could be applied to derive an analogue of
	Theorem~\ref{Theorem continous} for the model from \cite{GKP23}.  In
	fact, the basic proof structure we use for Theorem~\ref{Theorem
		continous} carries over verbatim and the analogue of
	\eqref{eq:expression for the surv prob} from
	Corollary~\ref{cor:survival prob} holds for the model from
	\cite{GKP23}.  Due to the more complicated structure of the
	transition rates of the ancestral process in \cite{GKP23} and the
	(much) greater generality in the choice of $\Lambda^\oplus$ and
	$\Lambda^\ominus$ compared to our set-up, a detailed analysis of the
	asymptotics of the fixation probabilities $\pi_N$ in the setting of
	\cite{GKP23} seems challenging and must presently be left for
	possible future work.
	
	A preliminary observation can already be made at this point:
	Write $\mu^\oplus = \int_{(0,1]} y \, \Lambda^\oplus(dy)$
	and $\mu^\ominus = \int_{(0,1]} y \, \Lambda^\ominus(dy)$
	for the means of $\Lambda^\oplus$ and $\Lambda^\ominus$, respectively.
	Inspection of the jump rates of the ancestral process in \cite[Prop.~4.5]{GKP23}
	(cf also the displayed equation below Proposition~4.5 there) shows that the
	expected loss rate of potential ancestors when there are currently $n$ of them
	is
	\begin{align*}
		& \frac{n}{N} \int_{[0,1]} \sum_{k=1}^{n-1} k \binom{n-1}{k} y^k(1- y)^{n-1-k} \, \Lambda^\ominus(dy) \\
		&\quad \quad \quad
		+ \Big(1-\frac{n}{N}\Big) \int_{[0,1]} \sum_{k=1}^{n-1} k \binom{n}{k+1} y^{k+1} (1-y)^{n-(k+1)} \, \Lambda^\ominus(dy) \\
		& \quad = \frac{n}{N} \int_{[0,1]} (n-1)y \, \Lambda^\ominus(dy) + \Big(1-\frac{n}{N}\Big) \int_{[0,1]} \sum_{j=2}^n (j-1) \binom{n}{j} y^j(1-y)^{n-j} \, \Lambda^\ominus(dy) \\
		& \quad = \frac{n}{N} (n-1)\mu^\ominus 
		+ \Big(1-\frac{n}{N}\Big) (n \mu^\ominus - 1 ) 
		+ \Big(1-\frac{n}{N}\Big) \int_{[0,1]} (1-y)^n \, \Lambda^\ominus(dy) \\
		& \quad = \Big(1-\frac{1}{N}\Big) n \mu^\ominus
		+ \Big(1-\frac{n}{N}\Big) \Big( \int_{[0,1]} (1-y)^n \, \Lambda^\ominus(dy) - 1 \Big) ,
	\end{align*} 
	while the expected gain rate is always bounded by the total mass of $\Lambda^\ominus$
	(which is $O(1)$ in \cite[Sect.~1--5]{GKP23}). Using the heuristic idea that
	the equilibrium point $\E[A^{(N)}_{eq}]$ should be close to that value of $n$
	where the total loss rate balances the total gain rate exactly (cf the argument at
	the beginning of Section~\ref{sec:sketch of proof}) thus suggests that
	$\E[A^{(N)}_{eq}] \approx O(1)$ and hence $\pi_N = O(1/N)$ in the setting from
	\cite[Sect.~1--5]{GKP23}.
	In fact, writing $\widetilde{L}^{(N)}$ for the generator of the ancestral process
	from \cite[Prop.~4.5]{GKP23} and using the function $g(x)= c x$ with a suitably chosen constant $c \in (0,\infty)$, quite analogous
	to the argument in Lemma~\ref{lm:upper} below, one finds
	\[
	\max_{x \in \{0,1,\dots,N\}} \widetilde{L}^{(N)}g(x) + x \le C,
	\]
	for some constant $C=C(\Lambda^\oplus,\Lambda^\ominus) < \infty$. Thus
	indeed $\E[A^{(N)}_{eq}] \le C$ and $\pi_N \le C/N$ by arguing as in the proof of Theorem~\ref{Theorem continous}.
	On the other hand, $\pi_N \ge 1/N$ is automatic and thus $\pi_N = \Theta(1/N)$ in this model
	whenever $\Lambda^\ominus \neq 0$.
	
	This appears also intuitively plausible: starting with one fit individual, this has a chance of $1/N$
	to be chosen as the parent in each reproduction event. If it is chosen, its offspring will
	afterwards constitute a non-trivial fraction of the population and since from then on the
	selective advantage can play out its full strength, it will be likely that the fit type prevails
	henceforth. However, if it not chosen as a parent, it has a non-trivial chance to die and be replaced
	by an offspring of an unfit individual. Thus, the chance that the initial fit individual
	becomes a parent before dying is $O(1/N)$.

	\paragraph{A two-type version Schweinsberg's model \cite{S03} with selection}
	In \cite{BDE23+}, the authors prove the asymptotics of the fixation probability in a moderate selection regime in a related model which uses discrete generations and weighted resampling from a pool of juveniles, extending \cite{S03} who studied the neutral case.  
	The treatment in \cite{BDE23+} follows the classical ``forwards in time'' route: a branching process approximation in the early stage of establishment (and in the final stage) together with quantitative control on the deviation from a deterministic approximation of the frequency path in the intermediate stage.
	Indeed, consider the fate of an individual of the \blue{advantageous} 
	type in our
	model at a time when that type is rare: Heuristically,
	there should then be almost no interaction among the \blue{advantageous} 
	individuals, which suggests to make the limiting law with
	generating function \eqref{eq:genfcttilde} slightly supercritical
	by allowing additional duplications at rate $\widetilde{c}_N s_N$
	in addition to neutral births/deaths at constant rate, i.e.\
	consider
	\begin{align}
		\label{eq:genfNtilde}
		\widetilde{f}_N(z) = s_N(1+s_N)^{-1} z^2 + (1+s_N)^{-1} \widetilde{f}(z).
	\end{align}
	In fact, one can check that then the survival probability
	$1-\widetilde{q}_N = 1-\widetilde{f}_N(q_N) > 0$ of a Galton-Watson
	process with \blue{reproduction} law \eqref{eq:genfNtilde} satisfies the asymptotics
	$1-\widetilde{q}_N \sim \mathrm{const.} \times
	(s_N)^{1/(\alpha-1)}$ as $N\to\infty$, which fits well to Theorem~\ref{Theorem
		continous}.  Details of such computations can be found in \cite{BDE23+}.
	
	\subsection{Sketch of the proof of Theorem \ref{Theorem continous}}
	\label{sec:sketch of proof}

	Our proof is based on the duality between the process $X^{(N)}$ that records the number of wildtype individuals and the ancestral selection process $A^{(N)}$. Due to Corollary \ref{cor:survival prob} the statement is proven once we show that $\EE{A^{(N)}_{eq}} \sim \alpha^{\frac{1}{\blue {\alpha-1}}} N^{1 -\frac{b}{\alpha-1}}$. 
	Heuristically, the expectation should be asymptotically equal to $\alpha^{\frac{1}{\blue{\alpha-1}}} N^{1 -\frac{b}{\alpha-1}}$, since at this point upwards and downwards drift cancel. Indeed, in Lemma \ref{Lem exact form of the generator} below we show that in state $x$ the upwards drift is  $\sim x s_N$ and the downwards drift is   $\sim c_N x^{\alpha} \frac{1}{(\alpha-1) \Gamma(\alpha+1)} $. So upwards and downwards cancel (asymptotically), if $x N^{-b} = c_N x^{\alpha}\frac{1}{(\alpha-1) \Gamma(\alpha+1)}$. This is the case for  $x = \left(\alpha N^{\blue{\alpha -1-b}}\right)^{\frac{1}{\blue{\alpha-1}}}.$ 
	
	To control the expectation of $\EE{A^{(N)}_{eq}}$ rigorously we use a Lyapunov-type argument, see \cite{GlynnZeevi2008} for a justification of this argument in more general settings  (with infinite state spaces).
	Denote the rate matrix of $A^{(N)}$ by $Q^{(N)}$ and assume the entries of $\pi^{(N)}:=(\pi_1^{(N)}, ..., \pi_N^{(N)})$ are the probability weights of the stationary distribution of $A^{(N)}$ in $1, ..., N$. By definition we have $\pi^{(N)} Q^{(N)}=0$ and since the state space of $A^{(N)}$ is finite we also have for any vector $g^{(N)} ={(g^{(N)}_1, ..., g^{(N)}_N)}^T \in \R^{N}$ that $\pi^{(N)} Q^{(N)} g^{(N)} =0.$ Consequently, if we find a constant $c^{(N)}$ and a vector $g^{(N)}$, such that 
	\begin{align}\label{ConditionUpperBound}
		Q^{(N)} g^{(N)} + (1, ..., N)^T \leq  c^{(N)} \cdot(1, ..., 1)^T,\end{align} we have 
	\begin{align*}
		\EE{A_{eq}^{(N)}} = \pi^{(N)} \cdot (1, ..., N)^T \leq c^{(N)}.
	\end{align*} 
	
	The key step in the proof of Theorem \ref{Theorem continous} is to find a vector $g^{(N)}$
	(which also can be interpreted as a function $g^{(N)}:\{1, ..., N\} \rightarrow \R$) and a constant $c^{(N)}\leq  \blue{\alpha^{\frac{1}{\alpha-1}}} N^{1 -\frac{b}{\alpha-1}} (1+o(1)) $,
	such that \eqref{ConditionUpperBound} is fulfilled. We will see that for the upper bound it suffices to consider linear functions, i.e. functions of the form $g^{(N)}(x) = a x$ with $a \in \R$, and determine an appropriate  $a = a_N$.
	This also corresponds to the heuristic calculation that $\EE{A_{eq}^{(N)}}$ is (close to) the point where the upwards and the downwards parts of the drift balance.

	One argues analogously for a lower bound on $\EE{A_{eq}^{(N)}}$. In this case linear functions are not sufficient, we consider instead functions of the form $g^{(N)}(x) = a_1 x^{\beta_1} + a_2 x^{\beta_2}$ with $0 <\beta_1 <\beta_2 <\alpha-1$ and constants $a_1= a_1(N)$ and $a_2= a_2(N)$.

	\subsection{Extensions}\label{sec:outlook}
	
	\subsubsection*{Fixation probabilities in the boundary cases $\alpha=2$ and $\alpha=1$}
	In Theorem \ref{Theorem continous} we consider the case $\alpha\in (1,2)$. The boundary case $\alpha=2$ corresponds to the parameter regime in which the neutral genealogies lie in the domain of attraction of the Kingman coalescent, and in the boundary case $\alpha=1$ the corresponding neutral genealogies are in the domain of attraction of the Bolthausen-Sznitman coalescent. 
	
	\begin{itemize}
		\item \blue{ For the case $\alpha =2$, note that Beta$(2-\alpha,\alpha) \to \delta_0$ as $\alpha\to 2$. For $\Lambda= \delta_0$ there are only binary neutral events and hence our model corresponds to the classical Moran model with selection.} It has been shown recently \blue{in a more general setting within the domain of attraction of Kingman's coalescent} that the fixation probability of a moderately advantageous allele is asymptotically equal to $\frac{2 s_N}{\sigma^2}$, where $\sigma^2$ denotes the individual offspring variance, see \cite{BoeGoPoWa1} and \cite{BoeGoPoWa2}. This formula goes back to the well-known approximations given by Haldane  \cite{H27} and by Kolmogorov \cite{Kolmogorov1938}. Letting $\alpha\rightarrow 2$ in Theorem \ref{Theorem continous} yields $\pi_N \sim 2 s_N$, that is we get back Haldane's asymptotic.
		
		In the case of a Moran model in the attraction of Kingman's coalescent  ($\Lambda=\delta_0$), the stationary distribution of $A_{eq}^{(N)}$ is known to be a binomial random variable  with parameters $N$ and $\frac{2 s_N}{ 2 s_N +1}$ conditioned to be larger than $0$ as for example observed in \cite{BoeGoPoWa1} or \cite{Cordero2017}. Applying \eqref{eq:expression for the surv prob} then yields Haldane's asymptotic.

		\item The case $\alpha=1$, i.e.\ in the domain of attraction of a Bolthausen-Sznitman coalescent, has been considered in \cite{H18}.   
		In this regime the pair coalescence probability $c_N$ fulfils $c_N\sim\frac{1}{\log(N)}$ and hence for a regime of moderate selection we suggest to consider $$s_N = \frac{1}{(\log N)^b},$$
		with $0<b<1$, see Subsection~\ref{sec: moderate selection}. 
		\blue{An ancestral selection graph which is analogous to the one from Definition~\ref{Def ASG} for the case $\alpha=1$ has transition rates $\tilde{r}_{n,m}$ from $n$ to $m$ of the following form: }
		\begin{align} 
			\tilde{r}_{n,n+1} &= n s_N (1- \frac{n}{N}), \qquad &n \in \{1,...,N-1\}\\
			\tilde{r}_{n, j} &= c_N \frac{n}{(n-j+1)(n-j)} ,  \, &j  \in \{1,...,n-1\}.
		\end{align}
		Hence, the upwards drift is $\sim n s_N$ and since  $\sum_{j=1}^{n-1} \frac{n-j}{(n-j+1)(n-j)} \sim  \log(n)$ the downwards drift $\sim c_N n \log(n) = \frac{n \log(n)}{\log N}.$ Upwards and downwards drift cancel if $$n = \exp\left( s_N \log N\right).$$ This suggests that the probability of fixation of an allele with selective advantage $s_N= \left(\frac{1}{\log(N)}\right)^b$ is asymptotically 
		\begin{align}\label{FixBS}
			\tilde{\pi}_N \sim \frac{ \exp( \log(N)^{1-b})}{N} = N^{s_N -1}.
		\end{align}
		This coincides with (the heuristic) formula (16) in \cite{H18} for the fixation probability of an advantageous allele when $s= \frac{1}{\log(N)^b}$ and $x_0 = \frac{1}{N}$.  
		One should be able to prove rigorously the asymptotic \eqref{FixBS}  with a similar approach as in the proof of Theorem \ref{Theorem continous}. 
	\end{itemize}
	
	\subsubsection*{Other choices of $\Lambda$}
	In Theorem \ref{Theorem continous} we assume that $\Lambda(dx)$ is exactly given as a Beta-distribution with parameters $(2-\alpha,\alpha)$, which makes the calculations in Section~\ref{Sec:Proof} more explicit. However, we believe that only the singularity of $\Lambda$ at $0$ is crucial for proving \eqref{eq:survival prob}. In particular, as long as $\Lambda(dx)$ is such that one obtains the same leading order in Lemma~\ref{Lem exact form of the generator} and one can prove Lemma~\ref{lem:Coalescence Generator}, the asymptotics for the survival probability will follow.
	In view of the computations in \cite{GY07}, we believe that our approach can be carried through if
	instead of the explicit form \eqref{Beta(2-alpha,alpha)} of $\Lambda$ we only assume that
	$\Lambda([0,x]) = c x^{2-\alpha} + O(x^{3-\alpha})$ as $x \downarrow 0$ with some constant $c \in (0,\infty)$.
	Undoubtedly, this extension will make the calculations more tedious, which is why we refrain from doing so here.

	\section{Proof of Theorem \ref{Theorem continous}} \label{Sec:Proof}
	
	Denote by $L^{(N)}$ 
	the generator of the $\Lambda$ ancestral selection process $A^{(N)}$ from Definition~\ref{Def ASG}. The key ingredient for the proof of Theorem \ref{Theorem continous} is the following Proposition.
	
	\begin{prop}\label{proposition}
		There exists  $c^{(N)}_1$ and $c^{(N)}_2 \in \mathbb R$ with $c^{(N)}_1, c^{(N)}_2 \sim \alpha^{\frac{1}{\blue{\alpha-1}}} N^{1- \frac b{\alpha-1}} $ as $N \to \infty$. 
		Additionally, there exist functions $ g_1^{(N)}$ and $g_2^{(N)} : \R_+ \to \R$ fulfilling
		\begin{align}
			\sup_{x \in [N]} L^{(N)}g_1^{(N)}(x) + x &\leq c^{(N)}_1 \label{crucial inequality1} \\ 
			\inf_{x \in [N] } L^{(N)} g_2^{(N)}(x) +x &\geq c^{(N)}_2 \label{crucial inequality2},
		\end{align}
		for $N$ large enough.
	\end{prop}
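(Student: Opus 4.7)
The plan is to follow the Lyapunov strategy from the sketch: verify the two inequalities with carefully chosen test functions. Both arguments start from the explicit formula
\begin{align*}
L^{(N)} g(x) = s_N x(1-x/N)\bigl[g(x+1)-g(x)\bigr] + c_N \sum_{j=1}^{x-1} q(x,j)\bigl[g(j)-g(x)\bigr],
\end{align*}
together with Lemma~\ref{Lem exact form of the generator} (applied to $g(x)=x$) and Lemma~\ref{lem:Coalescence Generator} (applied to $g(x)=x^\beta$ with $0<\beta<\alpha-1$), which will provide the leading-order asymptotics
\begin{align*}
L^{(N)}x &= s_N x(1-x/N) - \frac{c_N}{(\alpha-1)\Gamma(\alpha+1)}\, x^\alpha + \text{l.o.t.},\\
L^{(N)}(x^\beta) &= \beta s_N x^\beta(1-x/N) - c_N C_{\alpha,\beta}\, x^{\beta+\alpha-1} + \text{l.o.t.},
\end{align*}
for some explicit $C_{\alpha,\beta}>0$ (reducing to $1/((\alpha-1)\Gamma(\alpha+1))$ at $\beta=1$).

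For the upper bound I would take $g_1^{(N)}(x)=a_N x$ with $a_N := 1/((\alpha-1)s_N)$. By linearity, $x\mapsto L^{(N)}g_1^{(N)}(x)+x$ is, to leading order, the difference of a linear and an $x^\alpha$ term and is therefore concave on $\R_+$. Its maximum is attained at the critical point $x_\ast$ solving
\begin{align*}
a_N s_N+1 \;=\; \frac{a_N c_N\,\alpha}{(\alpha-1)\Gamma(\alpha+1)}\, x_\ast^{\alpha-1}.
\end{align*}
The chosen $a_N$ makes $a_N s_N+1=\alpha/(\alpha-1)$; plugging in $c_N\sim(\alpha-1)\Gamma(\alpha)N^{1-\alpha}$ then yields $x_\ast \sim \alpha^{1/(\alpha-1)}N^{1-b/(\alpha-1)}$, and the maximum value equals $(a_Ns_N+1)(\alpha-1)/\alpha\cdot x_\ast = x_\ast(1+o(1))$, which is exactly the advertised order of $c_1^{(N)}$. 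It remains to verify that the $x/N$ correction and the lower-order terms are uniformly negligible on $[N]$ (using $b<\alpha-1$, so $x_\ast/N\to 0$), and that restricting from $\R_+$ to the integer lattice $\{1,\dots,N\}$ does not alter the leading order. In fact $a_N=1/((\alpha-1)s_N)$ is the choice minimizing the maximum over $a>0$, which explains the constant $\alpha^{1/(\alpha-1)}$.

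For the lower bound I would take $g_2^{(N)}(x)=a_1 x^{\beta_1}+a_2 x^{\beta_2}$ with $0<\beta_1<\beta_2<\alpha-1$ and coefficients $a_1,a_2>0$ to be tuned. Using the asymptotic for $L^{(N)}(x^\beta)$,
\begin{align*}
L^{(N)} g_2^{(N)}(x) + x = \sum_{i=1}^{2} a_i\bigl[\beta_i s_N x^{\beta_i}(1-x/N) - c_N C_{\alpha,\beta_i}\, x^{\beta_i+\alpha-1}\bigr](1+o(1)) + x.
\end{align*}
The goal is to select $a_1,a_2$ so that the infimum over $x\in[N]$ equals $c_2^{(N)}\sim x_\ast$. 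At $x$ of order unity the $+x$ term is negligible compared to $c_2^{(N)}$; to compensate, I require $a_i\beta_i s_N$ to be of order $x_\ast$, so that the upward-drift parts alone give $L^{(N)}g_2^{(N)}(x)\ge x_\ast(1-o(1))$. For $x$ of order $x_\ast$ the $+x$ term already provides $\sim x_\ast$, but each $a_i$-block can be negative: its sign at $x_\ast$ is controlled by $\beta_i-\alpha(\alpha-1)\Gamma(\alpha)C_{\alpha,\beta_i}$, which vanishes exactly at $\beta=1$. The availability of two exponents $\beta_1,\beta_2$ with two coefficients provides enough flexibility to cancel any negative leading contribution there. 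For $x$ close to $N$ the constraint $\beta_i<\alpha-1$ ensures $c_N x^{\beta_i+\alpha-1}\ll x$, so the $+x$ term dominates.

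The main obstacle will be the uniform lower bound. Unlike the upper bound, which has a natural single-point extremum at $x_\ast$, here one must prevent any dip of $L^{(N)}g_2^{(N)}(x)+x$ below $c_2^{(N)}$ across the whole range $[N]$, and the error terms in the generator asymptotic are regime-dependent. I expect to split $[N]$ into three regimes, $x=O(1)$, $x$ of order $x_\ast$, and $x=\Theta(N)$, and to verify the inequality in each, with the parameters $(a_1,\beta_1,a_2,\beta_2)$ calibrated so that the small-$x$ and intermediate-$x$ constraints are met simultaneously; a single power function would force a trade-off between these, which is precisely why two terms are required.
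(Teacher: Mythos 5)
Your upper bound is essentially identical to the paper's (Lemma~\ref{lm:upper}): the same linear test function $g_1^{(N)}(x)=x/((\alpha-1)s_N)$, the same concavity/critical-point computation, and the same value $(s_N/(c_\alpha c_N))^{1/(\alpha-1)}\sim\alpha^{1/(\alpha-1)}N^{1-b/(\alpha-1)}$; that part is fine. The genuine gap is in the lower bound. You posit $g_2^{(N)}(x)=a_1x^{\beta_1}+a_2x^{\beta_2}$ with \emph{both} $a_1,a_2>0$, and your case analysis inspects only $x=O(1)$, $x\asymp d_N$ and $x=\Theta(N)$. But Lemma~\ref{lem:Coalescence Generator} gives $C_{\alpha,\beta}=\beta c_\alpha$ with $c_\alpha=1/((\alpha-1)\Gamma(\alpha+1))$, so each of your blocks equals $a_i\beta_i x^{\beta_i}\bigl(s_N(1-x/N)-c_\alpha c_N x^{\alpha-1}\bigr)$ up to the error terms: each block is a \emph{positive} multiple of the same drift factor, which is negative for every $x>d_N$. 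Consequently, for $x$ slightly above $d_N$ the block sum is $\lesssim -\,a_1\beta_1 c_\alpha c_N x^{\beta_1+\alpha-1}\asymp -\,d_N^{2-\alpha}x^{\beta_1+\alpha-1}$, which at $x=d_N$ is of order $-d_N^{1+\beta_1}$, i.e.\ larger than the available $+x\asymp d_N$ by a diverging factor $d_N^{\beta_1}$. So $L^{(N)}g_2^{(N)}(x)+x$ is very negative on the omitted range $d_N\ll x\ll N$, and no choice of \emph{positive} $a_1,a_2$ can repair this, since the two blocks carry the same sign and reinforce rather than cancel. Your side remark that the sign of a block at $x_\ast$ is governed by $\beta_i-\alpha(\alpha-1)\Gamma(\alpha)C_{\alpha,\beta_i}$ is also off: that quantity is identically zero for \emph{every} $\beta$, not only at $\beta=1$; every block vanishes at $d_N$ and turns negative beyond it.

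The paper's fix, which is the actual content of Lemma~\ref{lem:lower bound gen}, is to take $a_2<0$, calibrated so that $\beta_1a_1d_N^{\beta_1}+\beta_2a_2d_N^{\beta_2}=0$. Then $L^{(N)}g_2^{(N)}(x)$ is, to leading order, a product of two factors that \emph{both} vanish at $x=d_N$ and are both decreasing there, so the product is nonnegative on all of $[(1-\varepsilon)d_N,N]$ and the $+x$ term alone delivers the bound in that whole range; strict positivity of the product on $[\varepsilon d_N,(1-\varepsilon)d_N]$ and dominance of the term $a_1s_N\beta_1x^{\beta_1}(1-x/N)$ on $[1,\varepsilon d_N]$ (with $a_1=d_N/((2^{\beta_1}-1)s_N)$, so that already $a_1s_N((x+1)^{\beta_1}-x^{\beta_1})\ge d_N$ at $x=1$) handle the rest. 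Your two-term ansatz and regime-splitting instinct are right, but the essential idea --- the negative sign of $a_2$ and the tuning that makes the ``derivative'' factor change sign at exactly $d_N$ --- is absent, and without it the inequality \eqref{crucial inequality2} fails.
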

	\blue{The proof of Proposition \ref{proposition} will be given at the end of Section \ref{Section: Proof Prop 1}.} Next we give (on the basis of the statement of Proposition \ref{proposition}) a proof of Theorem \ref{Theorem continous}.
	
	\begin{proof}[Proof of Theorem \ref{Theorem continous}]
		As pointed out in the sketch of the proof of Theorem \ref{Theorem continous}  Proposition \ref{proposition} yields (by writing the above inequalities in terms of the rate matrix $Q^{(N)}$ of $A^{(N)}$, see \eqref{ConditionUpperBound}) that there exist $c_1^{(N)}$ and $c_2^{(N)}$ with $c^{(N)}_1, c^{(N)}_2 \sim\alpha^{\frac{1}{\blue{\alpha-1}}}N^{1- \frac b{\alpha-1}}$, such that 
		\begin{align}\label{inequalities AEeq}
			c^{(N)}_2 \leq \EE{A^{(N)}_{eq}} \leq c^{(N)}_1.
		\end{align}
		Then the claim of Theorem \ref{Theorem continous} is a direct consequence of \eqref{inequalities AEeq} and Corollary \ref{cor:survival prob}.
	\end{proof}

	\subsection{Bounds on the generator}
	The proof of Proposition~\ref{proposition} relies on Lemma~\ref{lm:upper} and Lemma~\ref{lem:lower bound gen}, which will be proved in Section~\ref{Section: Proof Prop 1}. We prepare both proofs by gathering some results regarding the generator $L^{(N)}$ of the $\Lambda$-ancestral selection process.
	
	The generator $L^{(N)}$ of $A^{(N)}$ acts as follows on functions $g: \R_+ \to \R$, with $x \in [N]$
	\begin{align}
		L^{(N)}g(x)= x s_N (1 - \frac{x}{N})\left( g(x+1)-g(x) \right) + c_N \sum_{y=1}^{x-1} q(x,y) \left( g(y)-g(x) \right). \label{Generator continuous}
	\end{align}
	
	\begin{lem}\label{lem:jump rates block counting}
		For the transition rates $q(x,y), 1 \le y < x$  given in \eqref{jump rates q} it holds 
		\begin{align}
			\label{eq:q(x,y)}
			q(x,y)= \frac{x}{\Gamma(2-\alpha)\Gamma(\alpha)}
			\frac{\Gamma(y+\alpha-1)}{\Gamma(y)} \frac{\Gamma(x-y-\alpha+1)}{\Gamma(x-y+2)}.
		\end{align}
	\end{lem}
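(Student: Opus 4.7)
The proof plan is a direct computation: substitute the explicit form of the Beta$(2-\alpha,\alpha)$ density into the integral defining $q(x,y)$ and recognize the result as a Beta integral.

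First I would write the binomial coefficient in Gamma notation,
\[
\binom{x}{x-y+1}=\frac{\Gamma(x+1)}{\Gamma(x-y+2)\,\Gamma(y)},
\]
and substitute \eqref{Beta(2-alpha,alpha)} into the definition of $q(x,y)$. Combining the powers of $p$ and $1-p$ in the integrand, the integral becomes
\[
\int_{0}^{1} p^{x-y-\alpha}(1-p)^{y+\alpha-2}\,dp,
\]
which is a classical Beta integral. Recognising the exponents as $a-1=x-y-\alpha$ and $b-1=y+\alpha-2$ gives $a=x-y-\alpha+1$, $b=y+\alpha-1$, $a+b=x$, and thus
\[
\int_{0}^{1} p^{x-y-\alpha}(1-p)^{y+\alpha-2}\,dp
= B(x-y-\alpha+1,\,y+\alpha-1)
= \frac{\Gamma(x-y-\alpha+1)\,\Gamma(y+\alpha-1)}{\Gamma(x)}.
\]
(Because $1\le y<x$ and $\alpha\in(1,2)$, both arguments of the Beta function are strictly positive, so the identity applies.)

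Plugging these pieces back in and using $\Gamma(x+1)=x\,\Gamma(x)$ to cancel the $\Gamma(x)$ from the Beta integral against the $\Gamma(x+1)$ from the binomial coefficient yields
\[
q(x,y)=\frac{x}{\Gamma(2-\alpha)\Gamma(\alpha)}\,
\frac{\Gamma(y+\alpha-1)}{\Gamma(y)}\,
\frac{\Gamma(x-y-\alpha+1)}{\Gamma(x-y+2)},
\]
which is exactly \eqref{eq:q(x,y)}. No estimate or limit is involved; the statement is a bookkeeping identity, so the only thing to be careful about is keeping the indices of the Gamma functions consistent when translating between factorials, binomial coefficients, and the Beta function.
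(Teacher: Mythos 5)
Your computation is correct and is essentially identical to the paper's own proof in the appendix: both substitute the Beta$(2-\alpha,\alpha)$ density, reduce the integral to $B(x-y-\alpha+1,\,y+\alpha-1)=\Gamma(x-y-\alpha+1)\Gamma(y+\alpha-1)/\Gamma(x)$, and cancel $\Gamma(x+1)/\Gamma(x)=x$. The remark that both Beta arguments are positive for $1\le y<x$ and $\alpha\in(1,2)$ is a nice touch that the paper leaves implicit.
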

	
	The proof can be found (for completeness) in the appendix.

	\begin{rem}
		\label{rem:q(x,y).approx}
		Using Gautschi's inequality, \eqref{eq:q(x,y)} implies that there exists a constant $c=c(\alpha) \in (1,\infty)$
		such that \blue{uniformly} for all $x > y \ge 1$
		\begin{align}
			\label{eq:q(x,y).approx}
			\frac{1}{c} \le \frac{x y^{\alpha-1}(x-y)^{-\alpha-1}}{q(x,y)} \le c.
		\end{align}
	\end{rem}
	
	\begin{lem} \label{Lem exact form of the generator}
		For functions $g: \R_+ \to \R, g(x)= a x$ with $a=a(N) \in \R$ it holds for $x\in \{1, \dots, N\}$
		\begin{align*}
			&L^{(N)} g(x)=a s_N \left( 1 - \frac{x}{N}\right)x- a  c_N x\frac{  \Gamma(x+\alpha) - \Gamma(\alpha+1) \Gamma(x+1)}{ (\alpha -1 ) \Gamma(\alpha +1) \Gamma(x+1)}.
		\end{align*}
	\end{lem}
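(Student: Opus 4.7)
The plan is to substitute $g(x) = ax$ directly into the generator \eqref{Generator continuous}. Since $g(x+1) - g(x) = a$, the selective term is immediate and equals $a s_N x(1 - x/N)$. In the neutral part, $g(y) - g(x) = -a(x-y)$, so it collapses to $-a c_N \phi(x)$ with
\begin{align*}
\phi(x) := \sum_{y=1}^{x-1}(x-y)\,q(x,y),
\end{align*}
and the whole task reduces to showing that $\phi(x) = x[\Gamma(x+\alpha) - \Gamma(\alpha+1)\Gamma(x+1)]/[(\alpha-1)\Gamma(\alpha+1)\Gamma(x+1)]$.

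First I would switch to the integral representation of $q(x,y)$ in \eqref{jump rates q} and reindex by the merger size $k = x-y+1$, which yields
\begin{align*}
\phi(x) = \int_{(0,1)} \sum_{k=2}^{x}(k-1)\binom{x}{k}\,p^{k-2}(1-p)^{x-k}\,\Lambda(dp).
\end{align*}
The inner polynomial can be collapsed using the two elementary identities $\sum_{k=0}^x k\binom{x}{k}p^k(1-p)^{x-k} = xp$ and $\sum_{k=0}^x\binom{x}{k}p^k(1-p)^{x-k} = 1$: subtracting the $k=0,1$ contributions and dividing by $p^2$ leaves
\begin{align*}
\phi(x) = \int_{(0,1)} \frac{xp - 1 + (1-p)^x}{p^2}\,\Lambda(dp).
\end{align*}
The integrand is nonnegative (the numerator is convex in $p$ with value and derivative zero at $p=0$) and integrable, because the numerator is $O(p^2)$ near $p = 0$ while $\Lambda(dp)/p^2$ has only a $p^{-1-\alpha}$ singularity there and $\alpha < 2$.

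The final step is to evaluate this integral for $\Lambda = \mathrm{Beta}(2-\alpha,\alpha)$. I would expand $(1-p)^x - 1 + xp = \sum_{j=2}^x \binom{x}{j}(-p)^j$ and apply $\int_0^1 p^{j-1-\alpha}(1-p)^{\alpha-1}\,dp = B(j-\alpha,\alpha)$ term by term. Converting to Pochhammer symbols via $\Gamma(j-\alpha)/\Gamma(2-\alpha) = (1-\alpha)_{j-1}/(1-\alpha)$ and the identity $(-1)^m\binom{x}{m+1} = x(1-x)_m/(m+1)!$ (which follows from $(-x)_{m+1} = -x(1-x)_m$), and shifting the summation index by $m = j-1$, the resulting alternating sum rewrites as
\begin{align*}
\phi(x) = \frac{x}{\alpha - 1}\Bigl[\,{}_2F_1(1-x,\,1-\alpha;\,2;\,1) - 1\Bigr].
\end{align*}
Since $(1-x)_m$ vanishes for $m \ge x$ this is actually a finite sum, and $c - a - b = 2 - (1-x) - (1-\alpha) = x + \alpha > 0$, so Gauss's summation theorem applies and gives ${}_2F_1(1-x,1-\alpha;2;1) = \Gamma(x+\alpha)/[\Gamma(x+1)\Gamma(\alpha+1)]$, which is exactly the claimed closed form.

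The only slightly delicate piece is the Pochhammer reindexing that recasts the alternating binomial sum as a ${}_2F_1$; once that is in place, Gauss's theorem finishes the computation and the rest of the proof is direct.
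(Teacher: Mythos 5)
Your proposal is correct, and it reaches the closed form for $\sum_{y=1}^{x-1}(x-y)q(x,y)$ by a genuinely different route than the paper. The paper starts from an identity imported from \cite{GY07}, namely $\sum_{y=1}^{x-1}(x-y)q(x,y)=\sum_{y=1}^{x-1}(x-y)\int_0^1(1-u)^{y-1}\Lambda(du)$, evaluates the Beta integrals, and then closes the resulting sum $\sum_y (x-y)\Gamma(y+\alpha-1)/\Gamma(y+1)$ by the telescoping Gamma-function summation of Lemma~\ref{lem:Identities gamma function final 1}. You instead derive, from elementary binomial identities applied to \eqref{jump rates q}, the canonical integral representation $\int_0^1 (xp-1+(1-p)^x)p^{-2}\,\Lambda(dp)$ for the mean downward drift, and only then specialize to the Beta measure, finishing with Gauss's summation theorem for a terminating ${}_2F_1$ at unit argument. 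I checked the delicate reindexing: $\Gamma(j-\alpha)/\Gamma(2-\alpha)=(1-\alpha)_{j-1}/(1-\alpha)$, $(-1)^m\binom{x}{m+1}=x(1-x)_m/(m+1)!$ and $(m+1)!=(2)_m\,m!/m!\cdot 1!$ combine exactly as you claim to give $\tfrac{x}{\alpha-1}[{}_2F_1(1-x,1-\alpha;2;1)-1]$, and Gauss's theorem (applicable since $c-a-b=x+\alpha>0$, and in fact the series terminates) yields $\Gamma(x+\alpha)/[\Gamma(x+1)\Gamma(\alpha+1)]$; a sanity check at $x=2$ gives $\phi(2)=1=q(2,1)$ on both sides. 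Your route is self-contained (no appeal to the \cite{GY07} identity) and isolates the $\Lambda$-generic part of the computation from the Beta-specific part, which fits the paper's later remark about more general $\Lambda$; the paper's route is shorter on the page because the heavy lifting is delegated to the cited identity and to Lemma~\ref{lem:Identities gamma function final 1}. Note that the hypergeometric evaluation you use is entirely in the spirit of the paper's own Lemma~\ref{Lem:identity Gamma two terms}, which likewise evaluates a terminating ${}_2F_1$ at $1$ via \cite{Abramowitz1972}.
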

	\begin{proof}
		Recall \eqref{Generator continuous}. We use the following equality which follows directly from Equations (1), (5) and (8) in \cite{GY07} ($q(x,y)$ is $\lambda_{x,x-y+1}$ in their notation)
		\begin{align*}
			\sum_{y=1}^{x-1} (x-y) q(x,y) & = \sum_{y=1}^{x-1} (x-y) \int_0^1 (1-u)^{y-1} \Lambda(du)
			\\ 
			&= \sum_{y=1}^{x-1} (x-y) \int_{0}^{1} \frac{(1-u)^{y-1} u^{1-\alpha} (1-u)^{\alpha -1 } }{\Gamma(2-\alpha) \Gamma(\alpha)} du
			\\ &= \sum_{y=1}^{x-1} (x-y)\frac{\text{Beta}(2-\alpha,y-1+\alpha)}{\Gamma(2-\alpha) \Gamma(\alpha)} 
			\\ &= \sum_{y=1}^{x-1} (x-y)\frac{\Gamma(y-1+\alpha)}{\Gamma(y+1) \Gamma(\alpha)},
		\end{align*}
		where Beta$(a,b)$ denotes the Beta-function, i.e. Beta$(a,b)= \frac{\Gamma (a) \Gamma(b)}{\Gamma(a+b)}$ for $a,b, >0$.
		
		Together with an application of Lemma \ref{lem:Identities gamma function final 1} this yields
		\begin{align}
			\sum_{y=1}^{x-1} (x-y) q(x,y) & =  \frac{1}{\Gamma(\alpha)} \frac{ x \Gamma(x+\alpha) - x\Gamma(\alpha+1) \Gamma(x+1)}{\alpha (\alpha -1 ) \Gamma(x+1)} \notag \\
			&=\frac{ x \Gamma(x+\alpha) - x\Gamma(\alpha+1) \Gamma(x+1)}{ (\alpha -1 ) \Gamma(\alpha +1) \Gamma(x+1)}.
			\label{first moment gen}
		\end{align}
		This together with \eqref{Generator continuous} proves the lemma.
	\end{proof}

	\begin{lem}\label{lem:Coalescence Generator}
		Let $0 <\beta < 1$. 
		We have for some constant $c_{\alpha, \beta} \in (0, \infty)$
		\begin{align}
			\left| \sum_{y=1}^{x-1} q(x,y) (y^{\beta}-x^{\beta}) +\frac{\beta }{(\alpha-1) \Gamma(\alpha+1)} x^{\beta} \frac{\Gamma(x+\alpha)}{\Gamma(x+1)} \right| \leq c_{\alpha, \beta} x^{\beta}. \label{eq:lem bound line counting}
		\end{align}
	\end{lem}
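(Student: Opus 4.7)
The plan is to Taylor-expand $y^\beta$ around $x$ to first order and reduce the sum to the linear case already computed in Lemma~\ref{Lem exact form of the generator}. Writing
\begin{align*}
y^\beta - x^\beta = \beta x^{\beta-1}(y - x) + R(x,y), \qquad R(x,y) := y^\beta - x^\beta - \beta x^{\beta-1}(y-x),
\end{align*}
Lemma~\ref{Lem exact form of the generator} applied with $g(y) = y$ gives directly
\begin{align*}
\beta x^{\beta-1}\sum_{y=1}^{x-1} q(x,y)(y-x) = -\frac{\beta}{(\alpha-1)\Gamma(\alpha+1)}\, x^\beta \frac{\Gamma(x+\alpha)}{\Gamma(x+1)} + \frac{\beta x^\beta}{\alpha-1},
\end{align*}
which already reproduces the claimed leading term up to an additive $O(x^\beta)$ contribution. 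Hence it suffices to show $\bigl|\sum_{y=1}^{x-1} q(x,y) R(x,y)\bigr| \leq C\, x^\beta$.

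For the remainder I would first establish the uniform bound $|R(x,y)| \leq C_\beta\, x^{\beta-2}(x-y)^2$ for all integers $1 \leq y \leq x$. After rescaling $s := (x-y)/x \in [0,1)$ this becomes the scalar inequality $|(1-s)^\beta - 1 + \beta s| \leq C_\beta s^2$ on $[0,1]$, which follows from a second-order Taylor expansion at $s=0$ together with boundedness on the compact interval $[0,1]$ (the endpoint value at $s=1$ being simply $|\beta-1|$). The task then reduces to proving $\sum_{y=1}^{x-1} q(x,y)(x-y)^2 = O(x^2)$.

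For this last estimate I would use the two-sided bound $q(x,y) \asymp x\, y^{\alpha-1}(x-y)^{-\alpha-1}$ from Remark~\ref{rem:q(x,y).approx} and split the sum at $y = x/2$. On $\{y \leq x/2\}$, $(x-y)^{-\alpha-1} \asymp x^{-\alpha-1}$, and $\sum_{y \leq x/2} y^{\alpha-1} \asymp x^\alpha$ (since $\alpha - 1 > 0$), yielding a contribution $\lesssim x^{2-\alpha} \cdot x^\alpha = x^2$. On $\{y > x/2\}$, $y^{\alpha-1} \asymp x^{\alpha-1}$ and, writing $k = x-y$, $\sum_{k=1}^{x/2} k^{1-\alpha} \asymp x^{2-\alpha}$ since $\alpha \in (1,2)$ gives $1-\alpha \in (-1,0)$, so this contribution is also $\lesssim x^\alpha \cdot x^{2-\alpha} = x^2$. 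Combining, $\sum_y q(x,y)|R(x,y)| \leq C_\beta\, x^{\beta-2} \cdot O(x^2) = O(x^\beta)$, which together with the identification of the main term proves the lemma with $c_{\alpha,\beta} = \beta/(\alpha-1) + C_\beta'$.

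The only mild obstacle lies in Step~2: the Lagrange form of the Taylor remainder only gives $|R(x,y)| \leq C_\beta\, \xi^{\beta-2}(x-y)^2$ for some $\xi$ between $y$ and $x$, and since $\beta - 2 < 0$ this is not uniformly $O(x^{\beta-2}(x-y)^2)$ when $y$ is close to $1$. Passing to the scale-invariant variable $s = (x-y)/x$ bypasses this issue cleanly by reducing the estimate to a single bounded scalar inequality on $[0,1]$.
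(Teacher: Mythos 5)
Your proof is correct and follows essentially the same route as the paper's: a first-order Taylor expansion of $y^\beta$ at $x$, identification of the main term via \eqref{first moment gen}, and control of the remainder sum using the two-sided bound $q(x,y)\asymp x\,y^{\alpha-1}(x-y)^{-\alpha-1}$ together with a split of the summation range. Your one refinement — obtaining the uniform bound $|R(x,y)|\le C_\beta x^{\beta-2}(x-y)^2$ for \emph{all} $1\le y<x$ by passing to the scale-invariant variable $s=(x-y)/x$ — is valid and slightly cleaner than the paper's two-regime estimate on the integral remainder (which uses the cruder bound $O(x^\beta)$ for $y<\varepsilon x$), but the argument is otherwise the same.
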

	
	\begin{proof} 
		For $x > y \ge 1$, a Taylor expansion to first order with integral remainder term yields
		\begin{align}
			y^\beta - x^\beta
			& = -\beta x^{\beta-1}(x-y) - \beta(1-\beta) \widetilde{R}(x,y)
			\quad \text{with} \quad 
			\widetilde{R}(x,y) := \int_y^x (t-y) t^{\beta-2} \, dt .
		\end{align}
		We have
		\begin{align}
			0 \le \widetilde{R}(x,y) \le \int_0^x t^{\beta-1} \, dt = \frac{1}{\beta} x^\beta,
		\end{align}
		and in addition (for any $0 < \varepsilon <1$)
		\begin{align}
			\widetilde{R}(x,y)
			\le 
			\varepsilon^{\beta-2} x^{\beta-2} \frac{(x-y)^2}{2}, \quad \text{ if \ } \varepsilon x \le y < x.
		\end{align}
		Thus, using \eqref{first moment gen}, 
		\begin{align}
			\sum_{y=1}^{x-1} q(x,y) (y^{\beta}-x^{\beta})
			& \le -\beta x^{\beta-1} \sum_{y=1}^{x-1} q(x,y) (x-y)
			= -\frac{\beta}{(\alpha -1 ) \Gamma(\alpha +1)} x^\beta \frac{\Gamma(x+\alpha)}{\Gamma(x+1)}
			+ \frac{\beta}{\alpha -1} x^\beta .
		\end{align}
		For the other bound we first estimate, using Remark~\ref{rem:q(x,y).approx}
		in the first inequality, \blue{and pretending $\varepsilon x \in \N$ to ease readability},
		\begin{align}
			\sum_{y=1}^{x-1} q(x,y) \widetilde{R}(x,y)
			& \le C x \sum_{y=1}^{x-1} y^{\alpha-1} (x-y)^{-\alpha-1} \widetilde{R}(x,y) \notag \\
			& \le C x \sum_{y=1}^{\varepsilon x} y^{\alpha-1} (x-y)^{-\alpha-1} x^\beta +
			C x \sum_{y=\varepsilon x}^{x-1} y^{\alpha-1} (x-y)^{-\alpha-1} x^{\beta-2} (x-y)^2 \notag \\
			& \le C x^{\beta-\alpha} \sum_{y=1}^{\varepsilon x} y^{\alpha-1}
			+ C x^{\beta} \sum_{y=\varepsilon x}^{x-1} \frac{1}{x} \Big( 1 - \frac{y}{x} \Big)^{1-\alpha} \Big( \frac{y}{x} \Big)^{\alpha-1}
			\le C x^{\beta},
		\end{align}
		since
		\begin{align}
			\sum_{y=1}^{\varepsilon x} y^{\alpha-1} &\le \int_0^{\varepsilon x+1} t^{\alpha-1} \, dt \le C x^\alpha,
			\quad \text{ and } \\
			\quad \sum_{y=\varepsilon x}^{x-1} \frac{1}{x} \Big( 1 - \frac{y}{x} \Big)^{1-\alpha} \Big( \frac{y}{x} \Big)^{\alpha-1}
			&\le C \int_0^1 u^{\alpha-1} (1-u)^{1-\alpha} \, du,
		\end{align}
		where $C$ denotes an unspecified constant (that depends on $\alpha, \beta$ and $\varepsilon$) whose value may change
		from line to line.
		
		Thus,
		\begin{align}
			\sum_{y=1}^{x-1} q(x,y) (y^{\beta}-x^{\beta})
			& = -\beta x^{\beta-1} \sum_{y=1}^{x-1} q(x,y) (x-y) - \beta(1-\beta) \sum_{y=1}^{x-1} q(x,y) \widetilde{R}(x,y)
			\notag \\
			& \ge 
			-\frac{\beta}{(\alpha -1 ) \Gamma(\alpha +1)} x^\beta \frac{\Gamma(x+\alpha)}{\Gamma(x+1)}
			- c_{\alpha,\beta} x^\beta,
		\end{align}
		for a suitable choice of $c_{\alpha,\beta}$ ($\ge \beta/(\alpha-1)$).
	\end{proof}
	
	A direct consequence of \eqref{Generator continuous} and Lemma \ref{lem:Coalescence Generator} is that for functions $g: \R_+ \to \R, g(x)= a x^\beta $ with $0<\beta<2- \alpha$ and $a=a(N) \in \R$  there are constants $c_1,c_2 \geq 0$ such that
	\begin{equation}
		\begin{split}
			a s_N & \left( 1 - \frac{x}{N}\right) x \big((x+1)^\beta-x^\beta\big)- a c_N  c_\alpha \beta \cdot x^\beta\frac{\Gamma(x+\alpha)}{\Gamma(x+1)} - c_1 a c_N x^{\beta }
			\\
			\leq &L^{(N)} g(x)\le a  s_N \left( 1 - \frac{x}{N}\right) x\big((x+1)^\beta-x^\beta\big)- a c_N  c_\alpha \beta \cdot x^\beta\frac{\Gamma(x+\alpha)}{\Gamma(x+1)} + c_2 a c_N x^{\beta },
		\end{split}
		\label{gen beta}
	\end{equation}
	for all $x\in \blue{[N]}$, where
	\begin{equation}
		\label{c_alpha}
		c_\alpha=\frac 1 {(\alpha-1) \Gamma (\alpha+1)}.
	\end{equation}
	
	\subsection{Preparations for and proof of Proposition \ref{proposition}} \label{Section: Proof Prop 1}

	According to Lemma \ref{Lem exact form of the generator} the generator $L^{(N)}$ applied to linear functions $g:\mathbb R_+ \to \mathbb R$, $g(x)=ax$, $a>0$ is of the form
	\begin{align}
		L^{(N)}g(x)= a s_N \big(1- \frac{x}{N}\big) x -  a c_N x c_\alpha \frac{\Gamma(x+\alpha)}{\Gamma(x+1)} + a c_N x/(\alpha-1)
	\end{align}
	\blue{with $c_\alpha$ from \eqref{c_alpha}.}
	
	An application of Gautschi's inequality yields 
	\begin{align}
		\blue{x \frac{\Gamma(x+\alpha)}{\Gamma(x+1)} > x(x+\alpha-1)^{\alpha-1} > x^\alpha }.
	\end{align}
	Hence, we arrive at 
	\begin{align}
		L^{(N)}g(x)\leq  a s_N \big(1- \frac{x}{N}\big) x -  a c_N c_\alpha x^{\alpha} + a c_N x/(\alpha-1) .\label{eq:prerequisite}
	\end{align}
	
	We denote by $d_N$ the solution of the equation 
	\begin{align}\label{firstfactor1}
		s_N \big(1- \frac{x}{N}\big) -  c_N c_\alpha x^{\alpha-1}=0.
	\end{align}
	Then
	\begin{align}\label{asym_dN}
		d_N \sim \left( \frac{s_N}{c_\alpha c_N}\right)^{\frac{1}{\alpha-1}},
	\end{align}
	since the left hand side of \eqref{firstfactor1} is $o(1)$ for $x=\left( \frac{s_N}{c_\alpha c_N}\right)^{\frac{1}{\alpha-1}}$.	
	
	Next we show first the upper   bound in Lemma \ref{lm:upper} and then the lower bound in Lemma \ref{lem:lower bound gen}. 

	\begin{lem}\label{lm:upper}
		Let $g^{(N)}(x)=ax$ with
		\[
		a = a(N) = \frac{1}{(\alpha-1)s_N + c_N}
		\]
		(note that $a(N) \sim \frac{1}{(\alpha-1) s_N} $
		as $N\to\infty$ because $c_N/s_N \to 0$).
		Then 
		\begin{align}\label{ineqaulity upper bound}
			\limsup_{N\to\infty} \max_{x \in [N]} \frac{L^{(N)} g^{(N)}(x) + x}{d_N} \leq 1.
		\end{align}
	\end{lem}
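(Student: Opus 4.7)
My plan is to use the upper bound \eqref{eq:prerequisite} with the specific choice $a = 1/((\alpha-1) s_N)$ and then explicitly maximize the resulting function of $x$, which turns out to be strictly concave with a global maximum at some $x^{**} \sim d_N$ taking value $d_N(1 + o(1))$.

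Substituting $a = 1/((\alpha-1) s_N)$ into \eqref{eq:prerequisite} and adding $x$, I observe the key algebraic identity $a s_N x + x = \alpha x/(\alpha-1)$, which absorbs the ``$+x$'' into the leading part of the selective drift with exactly the constant $\alpha/(\alpha-1)$. This yields
\[
L^{(N)} g^{(N)}(x) + x \leq \frac{\alpha x - x^2/N}{\alpha-1} - \frac{c_N c_\alpha x^\alpha}{(\alpha-1) s_N} + \frac{c_N \widetilde R^{(N)}(x)}{(\alpha-1) s_N}.
\]
Since $|\widetilde R^{(N)}(x)| \leq C x$ by \eqref{eq:prerequisite}, the last term is bounded by $C c_N x/((\alpha-1) s_N)$, which under the moderate-selection hypothesis $b < \alpha-1$ (so that $c_N/s_N = o(1)$) is only a lower-order linear-in-$x$ perturbation. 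Absorbing it into the linear coefficient yields
\[
L^{(N)} g^{(N)}(x) + x \leq \frac{\alpha_N x - x^2/N}{\alpha-1} - \frac{c_N c_\alpha x^\alpha}{(\alpha-1) s_N} =: G(x),
\]
with $\alpha_N := \alpha + C c_N / ((\alpha-1) s_N) = \alpha (1 + o(1))$.

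I then would maximize $G$ over $x \in (0, N]$. Since $G''(x) < 0$ on $(0,\infty)$, the function $G$ is strictly concave with a unique maximizer $x^{**}$ determined by the first-order condition $\alpha_N - 2 x^{**}/N = \alpha c_N c_\alpha (x^{**})^{\alpha-1}/s_N$. This relation forces $x^{**} \leq (\alpha_N s_N/(\alpha c_N c_\alpha))^{1/(\alpha-1)} \sim d_N$, hence $x^{**}/N \to 0$ and thus $x^{**} = d_N (1 + o(1))$ by \eqref{asym_dN}. Substituting the critical-point equation back into $G(x^{**})$ to eliminate $c_N c_\alpha (x^{**})^\alpha/s_N$, a short calculation gives
\[
G(x^{**}) = \frac{\alpha_N x^{**}}{\alpha} + \frac{(2-\alpha)(x^{**})^2}{\alpha(\alpha-1) N} = x^{**}(1 + o(1)) = d_N (1 + o(1)),
\]
using $\alpha_N/\alpha \to 1$ and $(x^{**})^2/N \ll x^{**}$. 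Dividing by $d_N$ and taking the $\limsup$ gives \eqref{ineqaulity upper bound}.

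The main delicate point is getting the leading constant in the upper bound to be exactly $1$ rather than some multiple strictly greater than $1$. This hinges on the precise choice $a = 1/((\alpha-1) s_N)$: it is calibrated so that the linear coefficient $a s_N + 1$ equals $\alpha/(\alpha-1)$, which then cancels exactly against the factor $(\alpha-1)/\alpha$ produced by the critical-point substitution, leaving the bare $x^{**} \sim d_N$ at the end. A cruder choice of $a$, or a looser bound on the coalescent term that retains a nontrivial positive constant, would yield a prefactor strictly larger than $1$ and thus fail to establish the sharp asymptotic.
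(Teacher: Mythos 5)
Your proof is correct and follows essentially the same route as the paper: starting from \eqref{eq:prerequisite} with the calibrated choice $a=1/((\alpha-1)s_N)$, exploiting concavity of the resulting upper-bounding function, and evaluating it at its unique critical point $x^{**}\sim d_N$ to get the sharp constant $1$. The only (immaterial) differences are that the paper drops the $-x^2/N$ term before maximizing and treats the remainder multiplicatively via a factor $(1+R^{(N)}(x))$, whereas you retain $-x^2/N$ and absorb the remainder additively into the linear coefficient $\alpha_N=\alpha(1+o(1))$.
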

	
	\begin{proof}
			By \eqref{eq:prerequisite} we have for $x \in [N]$
			\begin{align}\label{Lg_upper.0}
				&L^{(N)} g^{(N)}(x)+x < a \left( s_N + \frac{c_N}{\alpha-1}\right) x - a c_\alpha c_N x^\alpha + x =: f(x)
			\end{align}
			The first and second derivatives of $f$ are 
			\begin{align*}
				f'(x)&=as_N + a \frac{c_N}{\alpha-1} + 1- a \alpha c_{\alpha } c_N x^{\alpha-1} \\
				f''(x)&= -a \alpha (\alpha-1) c_{\alpha} c_N x^{\alpha-2},
			\end{align*}
			and we note that since $a>0$, $f''(x)<0$ for all $x >0$. The function $f$ is maximal at the point 
			\[ 
			x_0= x_0(N) = \left( \frac{s_N + \frac{c_N}{\alpha-1} + \frac{1}{a}}{\alpha c_\alpha c_N}\right)^{\frac{1}{\alpha-1}}
			\]
			and therefore its maximal value is 
			\begin{align}
				f(x_0) &= a x_0 \left( s_N + \frac{c_N}{\alpha-1} - c_\alpha c_N x_0^{\alpha-1} \right) + x_0 \notag \\
				& = a x_0 \left( s_N + \frac{c_N}{\alpha-1} - \frac{s_N}{\alpha} - \frac{c_N}{\alpha(\alpha-1)} - \frac{1}{\alpha a}\right) + x_0 \notag \\
				& = \frac{a x_0}{\alpha} \left( s_N(\alpha-1) + c_N - \frac{1}{a} \right) + x_0 = x_0
				\label{extrema f.0}
			\end{align}
			by the choice of $a=a(N)$. Due to 
			\eqref{choice c_N}, \eqref{conditions_s_b_alpha} 
			with $0<b<\alpha-1$ and the choice of $a=a(N)$ we have $x_0 \sim \left( \frac{s_N}{c_\alpha c_N}\right)^{\frac{1}{\alpha-1}}$ for $N\to\infty$ and the claim follows.
	\end{proof}
	
	\begin{lem} \label{lem:lower bound gen}
		Let $0<  \beta_1 <\beta_2 <\alpha-1$, $N \in \N$ and define $g^{(N)}: \mathbb \N \to \mathbb R$, $g^{(N)}(x)= a_1 x^{\beta_1} + a_2 x^{\beta_2}$ with 
		\begin{align}
			\label{lem:lower bound gen a defs}
			a_1  := a_1(N):= \frac{d_N }{(2^{\beta_1}-1) s_N}, \quad \text{ and } \quad a_2:= a_2(N):=- \frac{\beta_1 a_1 d_N^{\beta_1}}{\beta_2 d_N^{\beta_2}}= -\frac{\beta_1 d_N^{1 + \beta_1 -\beta_2}}{\beta_2 (2^{\beta_1 }-1) s_N}.
		\end{align}
		For this choice 
		we have
		\begin{align}
			\liminf_{N\to\infty} \min_{x \in \blue{[N]}} \frac{L^{(N)} g^{(N)}(x) + x }{d_N}\geq 1 \label{eq:lem lower boud L}.
		\end{align}
	\end{lem}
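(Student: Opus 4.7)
The plan is to decompose $L^{(N)} g^{(N)}$ via Lemma \ref{lem:Coalescence Generator} and the explicit form of the generator \eqref{Generator continuous}, show that the leading part (once one substitutes the defining relation of $d_N$) factorizes so that its minimum is achieved at $x=d_N$ with value asymptotically $d_N$, and verify that all remainders are uniformly $o(d_N)$.

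By linearity of $L^{(N)}$ and Lemma \ref{lem:Coalescence Generator} applied separately to each summand of $g^{(N)}$, we get
$$
L^{(N)}g^{(N)}(x) = M(x) + R(x), \ \text{ with }\ M(x):= \sum_{i=1,2}a_i\Bigl[s_N\bigl(1-\tfrac xN\bigr)x\bigl((x+1)^{\beta_i}-x^{\beta_i}\bigr) - c_\alpha c_N\beta_i x^{\beta_i}\tfrac{\Gamma(x+\alpha)}{\Gamma(x+1)}\Bigr],
$$
and $|R(x)|\le C c_N(|a_1| x^{\beta_1}+|a_2| x^{\beta_2})$. After substituting $(x+1)^{\beta_i}-x^{\beta_i}\approx\beta_i x^{\beta_i-1}$ (sharp at leading order for $x\gg1$) and $\Gamma(x+\alpha)/\Gamma(x+1)\approx x^{\alpha-1}$ (by Gautschi), and using both the defining identity $s_N(1-d_N/N)=c_\alpha c_N d_N^{\alpha-1}$ and the exact choices \eqref{lem:lower bound gen a defs}, the smoothed leading term $\widehat M(x)$ factors as
$$
\widehat M(x) = \frac{\beta_1 d_N\, x^{\beta_1}}{2^{\beta_1}-1}\,A(x)\,B(x),\quad
A(x):= \bigl(1-\tfrac{x}{N}\bigr)-\bigl(1-\tfrac{d_N}{N}\bigr)\bigl(\tfrac{x}{d_N}\bigr)^{\alpha-1},\quad
B(x):=1-\bigl(\tfrac{x}{d_N}\bigr)^{\beta_2-\beta_1}.
$$
Both $A$ and $B$ are strictly decreasing on $(0,N]$ with a simple zero at $d_N$, so their product is nonnegative and vanishes only there. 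Hence $\widehat M(x)\ge 0$ and $\widehat M(d_N)+d_N=d_N$.

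To upgrade this to the uniform bound $\liminf_{N\to\infty}\min_{x\in[N]}(L^{(N)}g^{(N)}(x)+x)/d_N\ge 1$, I would split the range of $x$ into three regions. For $x\ge d_N$, use $\widehat M(x)\ge 0$, which gives $\widehat M(x)+x\ge d_N$ trivially. For $x$ in a shrinking window $|x-d_N|\le\eta d_N$, compute $A'(d_N)=-\Theta(1/d_N)$ and $B'(d_N)=-(\beta_2-\beta_1)/d_N$, so $\widehat M(x)\gtrsim d_N^{\beta_1-1}(x-d_N)^2$; the elementary inequality $\lambda h^2+h\ge -1/(4\lambda)$ with $h=x-d_N$ and $\lambda\asymp d_N^{\beta_1-1}$ yields $\widehat M(x)+x\ge d_N-O(d_N^{1-\beta_1})=d_N(1-o(1))$ since $\beta_1>0$. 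For the remaining small values $x\le d_N(1-\eta)$, $A(x)$ and $B(x)$ are bounded below by positive constants, so $\widehat M(x)\gtrsim d_N x^{\beta_1}$, which easily dominates $d_N-x<d_N$; in particular at $x=1$ one must abandon the Taylor approximation and use the exact coefficient $(x+1)^{\beta_1}-x^{\beta_1}=2^{\beta_1}-1$ at $x=1$, whereupon the normalization of $a_1$ is precisely such that $a_1 s_N (2^{\beta_1}-1)=d_N$, giving $M(1)+1=d_N(1+o(1))$.

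It remains to bound $R(x)$ and the Taylor/Gautschi smoothing error $M-\widehat M$. Using $c_N/s_N\sim 1/(c_\alpha d_N^{\alpha-1})$ together with $|a_1|\sim d_N/s_N$ and $|a_2|\sim d_N^{1+\beta_1-\beta_2}/s_N$, one finds $c_N|a_1|x^{\beta_1}$ and $c_N|a_2|x^{\beta_2}$ at the critical scale $x\asymp d_N$ are of order $d_N^{\beta_1+2-\alpha}$, hence $o(d_N)$ precisely because $\beta_1<\alpha-1$; for $x$ much larger than $d_N$ the main term itself dominates the error so no sharp bound is needed. I expect the main obstacle to be exactly this sharp matching at $x\sim d_N$, where the leading term is only $\Theta(d_N)$ and therefore must be resolved beyond its leading order. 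The two-term ansatz $a_1 x^{\beta_1}+a_2 x^{\beta_2}$ is essential precisely because it forces $\widehat M$ to have a \emph{double} zero at $x=d_N$: one from $A(x)$ (which holds for any choice of coefficients, by definition of $d_N$) and a second from $B(x)$, itself the consequence of the tuning $a_1\beta_1+a_2\beta_2 d_N^{\beta_2-\beta_1}=0$ built into \eqref{lem:lower bound gen a defs}. Without this double zero, the quadratic lower bound on $\widehat M$ could not absorb the linear $+x$ loss and keep $\widehat M+x\ge d_N(1-o(1))$, while the $(2^{\beta_1}-1)$ normalization of $a_1$ is simultaneously forced by the exact behaviour at $x=1$.
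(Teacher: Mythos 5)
Your proposal is correct and takes essentially the same route as the paper's proof: you invoke Lemma~\ref{lem:Coalescence Generator}, write the leading term as a product of two decreasing factors sharing a (hence double) zero at $d_N$, split the range of $x$ into regions around, below and above $d_N$, check that all remainder terms are $o(d_N)$ using $\beta_1<\alpha-1$, and correctly identify $x=1$ as the binding constraint that forces the $(2^{\beta_1}-1)$ normalization of $a_1$. The one detail still to be supplied is the range of small integers $x\in\{2,\dots,K\}$, where (exactly as at $x=1$) the replacement $(x+1)^{\beta_1}-x^{\beta_1}\approx\beta_1 x^{\beta_1-1}$ is off by a constant factor and one should instead argue, as the paper does in its Case~(iv), that $x\big((x+1)^{\beta_1}-x^{\beta_1}\big)$ is increasing in $x$ and hence bounded below by its value $2^{\beta_1}-1$ at $x=1$.
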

	\begin{proof}
		Using Lemma~\ref{lem:Coalescence Generator} \blue{(cf.\ also \eqref{gen beta}) and Gautschi's inequality,} we see that for all $N \in \N$ we have
		\begin{align}
			\label{eq:generator beta 1}
			L^{(N)} g^{(N)}(x)
			& =  a_1 s_N x\left(1- \frac{x}{N}\right)\left(\beta_1 x^{\beta_1 -1} + R^{(N)}_{1,1}(x) \right)  + a_2 s_N x\left(1- \frac{x}{N} \right)\left(\beta_2 x^{\beta_2 -1} + R^{(N)}_{2,1}(x) \right) \notag \\
			& \quad  - a_1 c_N \left( \beta_1  c_\alpha x^{\alpha + \beta_1 -1} + R^{(N)}_{1,2}(x) \right) - a_2 c_N \left( \beta_2  c_\alpha x^{\alpha + \beta_2 -1} + R^{(N)}_{2,2}(x) \right),
		\end{align}
		where the remainder terms satisfy for $x=1,2,\dots,N$ 
		\begin{align}
			|R^{(N)}_{1,1}(x)| \le C_1 x^{\beta_1-2}, \quad |R^{(N)}_{2,1}(x)| \le C_1 x^{\beta_2-2}, \quad
			|R^{(N)}_{1,2}(x)| \le C_1 x^{\beta_1}, \quad |R^{(N)}_{2,2}(x)| \le C_1 x^{\beta_2}
		\end{align}
		with some constant $C_1=C_1(\alpha,b,\beta_1,\beta_2)$ that does not depend on $N$.
		We can re-write \eqref{eq:generator beta 1} as
		\begin{align}
			L^{(N)} g^{(N)}(x)
			& = \left(s_N (1- \frac{x}{N}) - c_\alpha c_N x^{\alpha-1}\right) \left(\beta_1 a_1 x^{\beta_1} + \beta_2 a_2 x^{\beta_2} \right)  \label{eq:generator beta 1a}
			\\ & \qquad +
			R^{(N)}_1(x) + R^{(N)}_2(x) + R^{(N)}_3(x) + R^{(N)}_4(x)
			\label{eq:generator beta 1a errors}. 
		\end{align}
		with remainder terms satisfying
		\begin{equation}
			\label{eq:generator beta 1a errbds}
			\begin{split}
				|R^{(N)}_1(x)| \le C_2 a_1 s_N x^{\beta_1-1}, & \quad |R^{(N)}_2(x)| \le C_2 a_2 s_N x^{\beta_2-1}, \\
				|R^{(N)}_3(x)| \le C_2 a_1 c_N x^{\beta_1}, & \quad |R^{(N)}_4(x)| \le C_2 a_2 c_N x^{\beta_2}
			\end{split}
		\end{equation}
		uniformly in $x \in [N]$ with some constant $C_2=C_2(\alpha,b,\beta_1,\beta_2)$ that does not depend on $N$.
		\smallskip
		
		\blue{For ease of reference in the following calculations, let
			us recall here from \eqref{conditions_s_b_alpha}, \eqref{choice c_N}, \eqref{asym_dN} that as $N\to\infty$,} 
		\begin{equation}
			\blue{ s_N \sim N^{-b}, \quad c_N \sim (\alpha-1)\Gamma(\alpha) N^{1-\alpha}, \quad
				d_N \sim \left( \frac{s_N}{c_\alpha c_N}\right)^{\frac{1}{\alpha-1}}
				\sim \alpha^{1/(\alpha-1)} N^{1-b/(\alpha-1)} .}
		\end{equation}
		By definition of $d_N$ (recall \eqref{firstfactor1}), the first
		factor in the product on the right-hand side of \eqref{eq:generator beta 1a} is $0$ for $x=d_N$
		and by the choice of $a_1$ and $a_2$, the second factor there is
		also $0$ for $x=d_N$.  Furthermore, the function
		$x \mapsto s_N (1- \frac{x}{N}) - c_\alpha c_N x^{\alpha-1}$ is
		strictly decreasing on $(0,\infty)$; the function
		$x \mapsto h(x) := \beta_1 a_1 x^{\beta_1} + \beta_2 a_2
		x^{\beta_2}$ is strictly decreasing for
		$x > (\beta_1/\beta_2)^{1/(\beta_2-\beta_1)} d_N$ (observe
		$h'(x) = \beta_1^2 a_1 x^{\beta_1-1} + \beta_2^2 a_2 x^{\beta_2-1}
		= a_1(\beta_1^2x^{\beta_1} - \beta_2^2(-a_2/a_1)x^{\beta_2})/x$,
		recall $0 < \beta_1 < \beta_2 < \alpha-1 <1$ and
		$a_2=-(\beta_1/\beta_2) a_1 d_N^{-(\beta_2-\beta_1)}$).
		\medskip
		
		Pick $\varepsilon > 0$ so small that $1-\varepsilon \ge (\beta_1/\beta_2)^{1/(\beta_2-\beta_1)}$.
		\smallskip
		
		\noindent
		\textbf{Case (i)}: Consider $x \ge (1-\varepsilon) d_N$. From \eqref{eq:generator beta 1a errbds}
		and the definitions of $a_1, a_2$ in \eqref{lem:lower bound gen a defs}
		one checks that
		\begin{align}
			\lim_{N\to\infty}
			\max_{x \in [N] \cap [(1-\varepsilon) d_N,\infty)} \frac{|R^{(N)}_1(x)|+|R^{(N)}_2(x)|+|R^{(N)}_3(x)|+|R^{(N)}_4(x)|}{x}
			= 0,
		\end{align}
		by observing that for \blue{ $x \ge (1-\varepsilon) d_N$:}
		\begin{itemize}
			\item $a_1 s_N x^{\beta_1-1} \le C (d_N/x) x^{\beta_1} = o(x)$
			\item $|a_2| s_N x^{\beta_2-1} \le C (d_N^{1-(\beta_2-\beta_1)}/x) x^{\beta_2} = o(x)$
			\item $a_1 c_N x^{\beta_1} \le C d_N^{2-\alpha} x^{\beta_1} \le C x^{2-\alpha+\beta_1} = o(x)$
			\item $|a_2| c_N x^{\beta_2} \le C d_N^{2-\alpha-(\beta_2-\beta_1)} x^{\beta_2} \le C x^{2-\alpha+\beta_1} = o(x)$,	 
		\end{itemize}
		where $C$ denotes a unspecified constant that might change from line to line. Thus we see from \eqref{eq:generator beta 1a}--\eqref{eq:generator
			beta 1a errors} and the discussion above that
		\begin{align}
			\label{eq:lower bound gen case 1}
			\min_{x \in [N], \, x \ge (1-\varepsilon) d_N} \frac{L^{(N)}g^{(N)}(x) + x}{d_N} \ge 1-2\varepsilon,
		\end{align}
		for all large enough $N$.
		\smallskip
		
		\noindent
		\textbf{Case (ii)}: Consider $\varepsilon d_N \le x < (1-\varepsilon) d_N$.
		For such $x$, the remainders in \eqref{eq:generator beta 1a errors}, making use of \eqref{eq:generator beta 1a errbds} and recalling the definition of $a_1,a_2$, satisfy
			\begin{align}
				\sum_{i=1}^4 |R_i^{(N)}(x)| &\leq C \left( d_N x^{\beta_1-1} + d_N^{1+\beta_1-\beta_2} x^{\beta_2-1} + d_N \frac{c_N}{s_N} x^{\beta_1} + \frac{c_N}{s_N} d_N^{1+\beta_1-\beta_2} x^{\beta_2} \right) \notag \\
				&\leq C \left(  d_N x^{\beta_1-1} + d_N^{1+\beta_1-\beta_2} x^{\beta_2-1} + d_N^{2-\alpha} x^{\beta_1}+ d_N^{2-\alpha+\beta_1-\beta_2} x^{\beta_2} \right)
				& 
				\label{eq:case.ii.term0}
			\end{align}
			for some constant $C=C(\alpha,b,\beta_1,\beta_2,\eps)$ which might change from line to line, where we used the fact that $c_N/s_N = O(d_N^{1-\alpha})$ by \eqref{asym_dN}. Since $(1-\eps)d_N \geq x \geq \eps d_N$ and $\beta_1 < \alpha-1$ we arrive at 
			\begin{align}\label{eq:EstimateSumRemainderTerms}
				\sum_{i=1}^4 |R_i^{(N)}(x)| &\leq C \left( 2 d_N^{\beta_1} + 2  d_N^{2-\alpha + \beta_1 }\right) \le C d_N^{2-\alpha +\beta_1} < C d_N .
			\end{align}

		For the main term, we analyse the two factors of the product in \eqref{eq:generator beta 1a} separately. Writing $[\varepsilon d_N, (1-\varepsilon)d_N] \ni x = \xi d_N$ with 
		$\xi \in (\eps , (1-\eps) )$, recalling that $d_N$ is the solution to \eqref{firstfactor1}, we have for the first \blue{factor}
		\begin{align}
			s_N \left(1- \frac{\xi d_N}{N}\right) - c_\alpha c_N \left(\xi d_N\right)^{\alpha-1} & = s_N (1-\frac{d_N}{N})- c_{\alpha}c_N d_N^{\alpha-1} \notag \\ & \quad + s_N \frac{1-\xi}{N} d_N + c_{\alpha} c_N \left(d_N^{\alpha-1}-(\xi d_N)^{\alpha -1}\right) \notag \\
			&\geq c_{\alpha}(1-\xi^{\alpha-1})c_Nd_N^{\alpha-1}. \label{eq:term1}
		\end{align}
		On the other hand for the second factor in \eqref{eq:generator beta 1a} we get
		\begin{align}
			\beta_1 a_1 (\xi d_N)^{\beta_1} + \beta_2 a_2 (\xi d_N)^{\beta_2}&= \frac{\beta_1}{2^{\beta_1}-1} \left( \frac{d_N}{s_N} (\xi d_N)^{\beta_1} - \frac{d_N^{1+\beta_1- \beta_2}}{s_N} (\xi d_N)^{\beta_2}\right) \notag \\
			&= \frac{d_N^{1+\beta_1}}{s_N } \frac{\beta_1}{2^{\beta_1}-1} (\xi^{\beta_1}-\xi^{\beta_2}). \label{eq:term2}
		\end{align}
		Putting \eqref{eq:EstimateSumRemainderTerms}, 
		\eqref{eq:term1} and \eqref{eq:term2} together and noting that $\xi^{\beta_1}-\xi^{\beta_2}>0$ (since $0< \xi < 1$ and $0<\beta_1<\beta_2$) we obtain with $C(\xi) := (c_\alpha \beta_1)(2^{\beta_1}-1)^{-1} (1-\xi^{\alpha-1}) (\xi^{\beta_1}-\xi^{\beta_2})$ (note $\inf_{\varepsilon \le \xi \le 1-\varepsilon} C(\xi)>0$) that 
		\begin{align}
			\min_{\varepsilon d_N \le x \le (1-\varepsilon) d_N} L^{(N)} g^{(N)}(x) & \geq \inf_{\varepsilon \le \xi \le 1-\varepsilon} C(\xi) d_N^{1 + \beta_1} \frac{c_N}{s_N} d_N^{\alpha-1} - C_3 d_N 
			\geq C_4 d_N^{1+\beta_1}
			\label{eq:lower bound gen case 2}
		\end{align}
		with some constant $C_4>0$ for $N$ large enough due to \eqref{asym_dN}.
		\smallskip
		
		\noindent
		\textbf{Case (iii)}: Consider $K \le x < \varepsilon d_N$ for a
		(large) constant $K$ that will be suitably tuned. For such $x$'s,
		when $K$ is large enough and $\varepsilon$ small, one checks that
		the term
		\begin{equation}
			\label{eq:lower bound gen case 2 dom}
			a_1 s_N \beta_1 x^{\beta_1} (1-x/N) = \frac{\beta_1}{2^{\beta_1}-1} d_N x^{\beta_1} (1-x/N)
		\end{equation}
		in \eqref{eq:generator beta 1} dominates all the others (in the
		sense that the sum of the absolute values of all other terms is
		smaller than \blue{$\delta$} times that term for sufficiently large $N$
		for some $\delta=\delta(\beta_1,\beta_2,\varepsilon)>0$, \blue{which can be made arbitrarily small by choosing $\eps$ small}).  Indeed, \blue{since here $x \le d_N=o(N)$ by \eqref{asym_dN}},
		we have
		\begin{align}
			\frac{|a_2| s_N \beta_2 x^{\beta_2} (1-x/N)}{a_1 s_N \beta_1 x^{\beta_1} (1-x/N)}
			& \le \frac{\beta_2 |a_2|}{\beta_1 a_1} x^{\beta_2-\beta_1}
			= d_N^{-(\beta_2-\beta_1)} x^{\beta_2-\beta_1} \le \varepsilon^{\beta_2-\beta_1}, \\
			\frac{a_1 c_N x^{\beta_1+\alpha-1}}{a_1 s_N \beta_1 x^{\beta_1} (1-x/N)}
			& \le C \frac{c_N}{s_N} x^{\alpha-1} = C \frac{x^{\alpha-1}}{d_N^{\alpha-1}} \le C \varepsilon^{\alpha-1}, \\
			\frac{|a_2| c_N x^{\beta_2+\alpha-1}}{a_1 s_N \beta_1 x^{\beta_1} (1-x/N)}
			& \le C d_N^{\beta_1-\beta_2} \frac{c_N}{s_N} x^{\alpha-1+\beta_2-\beta_1}
			= C \frac{x^{\alpha-1+\beta_2-\beta_1}}{d_N^{\alpha-1+\beta_2-\beta_1}}
			\le C \varepsilon^{\alpha-1+\beta_2-\beta_1},
		\end{align}
		with some constants $C=C(\alpha,\beta_1,\beta_2)$. By \eqref{eq:generator beta 1a errbds}  together with \eqref{eq:generator beta 1} $|R_i^{(N)}|$ is smaller than $1/x$ for $i=1,2$ and smaller than $1/x^{\alpha-1}$ for $i=3,4$ times the respective terms, hence we can ignore them.
		
		Furthermore, the quantity in \eqref{eq:lower
			bound gen case 2 dom} is (strictly) larger than $d_N$ as soon as
		$\beta_1 K^{\beta_1} (1-\varepsilon d_N/N)/(2^{\beta_1}-1) > 1$.  Thus
		\begin{align}
			\label{eq:lower bound gen case 3}
			\min_{x \in [N], \, K \le x <  \varepsilon d_N} \frac{L^{(N)}g^{(N)}(x) + x}{d_N} \ge 1
		\end{align}
		for all $N$ large enough.
		\smallskip
		
		\noindent
		\textbf{Case (iv)}: Consider $x \le K$. Here, we work with the general formula \eqref{Generator continuous}
		for the generator. The choice of $a_1,a_2$ implies that there exists a \blue{function} $R^{(N)}_5:\R_+ \to \R$ such that  
		\begin{align}
			L^{(N)} g_N(x)+ x= a_1 s_N \blue{x} ((x+1)^{\beta_1} - x^{\beta_1}) \big(1+R^{(N)}_5(x)\big) +x,
			\quad x=1,2,\dots,K
		\end{align}
		and $\max_{1 \le x \le K} |R^{(N)}_5(x)| \to 0$ as $N\to\infty$. \blue{To argue that $\max_{1 \le x \le K} |R^{(N)}_5(x)| \to 0$ we used that $c_N/s_N \to 0$ and $|a_2|/a_1\to 0$.}
		In particular, due to our choice of $a_1$ we have
		$L^{(N)} g_N(1)\blue{=} d_N (1+R^{(N)}_5(1))$. 
		\blue{ Since $x \mapsto a_1 s_N x ((x+1)^{\beta_1} - x^{\beta_1})$ is increasing for all $x \geq 0$, see \eqref{eq:increasing function} at the end of this proof, we have }
		
		\begin{align}
			\label{eq:lower bound gen case 4}
			\liminf_{N\to\infty} \min_{1 \le x \le K} \frac{L^{(N)}g^{(N)}(x) + x}{d_N} \ge 1.
		\end{align}
		\smallskip
		
		Combining \eqref{eq:lower bound gen case 1}, \eqref{eq:lower bound gen case 2}, \eqref{eq:lower bound gen case 3}
		and \eqref{eq:lower bound gen case 4} we see that the $\liminf$ in \eqref{eq:lem lower boud L}
		is at least $1-2\varepsilon$, then take $\varepsilon \downarrow 0$ to conclude.
		
		\blue{Lastly, we verify that $f(x)=x ((x+1)^{\beta_1} - x^{\beta_1})$ is increasing for $x \geq 0$, by observing
			\begin{align}
				f'(x)&=   (x+1)^{\beta_1} -
				x^{\beta_1} + x \beta_1 ((x+1)^{\beta_1-1} - x^{\beta_1-1})  \notag \\
				&= (x+1)^{\beta_1} \left( 1- (1+\beta_1)\Big( \frac{x}{x+1} \Big)^{\beta_1} + \frac{\beta_1 x}{x+1}  \right) \notag \\
				&\geq (x+1)^{\beta_1} \left( 1- (1+\beta_1) \Big(1- \frac{\beta_1}{x+1} \Big) + \frac{\beta_1 x}{x+1}   \right) \geq 0, \label{eq:increasing function}
			\end{align}
			where we used a generalised Bernoulli inequality (namely, $(1+y)^{\beta_1} \le 1+\beta_1 y$ for $y>-1$ and $0<\beta_1 \le 1$) in the last line.
		}
	\end{proof}

	\begin{proof}[Proof of Proposition~\ref{proposition}]
		The proof is a straightforward consequence of Lemmas~\ref{lm:upper} and \ref{lem:lower bound gen}, noting that $d_N$ defined there fulfils
		\begin{align}
			\frac{d_N}{\alpha^{\frac{1}{\blue{\alpha-1}}} N^{1- \frac b{\alpha-1}}} \to 1, \quad \text{as } N \to \infty. 
		\end{align}
	\end{proof}
	
	\appendix
	\section{Technical results}
	
	In this section we  collect (the proofs) of a couple of results, which we use in the main text. These results are presumably well known, but we recall them and their proofs for completeness and  (partially) due to a lack of a point of reference.

	\begin{proof}[Proof of Lemma \ref{lem:jump rates block counting}] 
		We have
		\begin{align}
			q(x,y)
			& = \binom{x}{x-y+1} \int_0^1 u^{x-y+1} (1-u)^{x-(x-y+1)} \, \frac{\Lambda(du)}{u^2} \notag \\
			& = \frac{x!}{(x-y+1)! (y-1)!} \frac{1}{\Gamma(2-\alpha)\Gamma(\alpha)}
			\int_0^1 u^{x-y-\alpha} (1-u)^{y+\alpha-2} \, du \notag \\
			& = \frac{1}{\Gamma(2-\alpha)\Gamma(\alpha)} \frac{\Gamma(x+1)}{\Gamma(x-y+2) \Gamma(y)}
			\frac{\Gamma(x-y-\alpha+1) \Gamma(y+\alpha-1)}{\Gamma(x)} \notag \\
			& = \frac{x}{\Gamma(2-\alpha)\Gamma(\alpha)}
			\frac{\Gamma(y+\alpha-1)}{\Gamma(y)} \frac{\Gamma(x-y-\alpha+1)}{\Gamma(x-y+2)},
			\label{q(x,y).detail}
		\end{align}
		recall $n! = \Gamma(n+1)$ and see also e.g. \cite[p.~2087]{K12}.
	\end{proof}
	
	
	\begin{lem} \label{lem:Identities gamma function final 1}
		Let $a, b > -1$, $n \in \N$. We have
		\begin{equation}
			\label{eq:Gamma.frac.sum}
			\sum_{j=1}^n \frac{\Gamma(j+a)}{\Gamma(j+b)}
			= 
			\begin{cases} \displaystyle 
				\frac{1}{a-b+1} \Big( \frac{\Gamma(n+a+1)}{\Gamma(n+b)} - \mathbbm{1}_{b \neq 0}\frac{\Gamma(a+1)}{\Gamma(b)} \Big) & \text{if } b \neq a+1, \\[2ex]
				\displaystyle 
				\sum_{j=1}^n \frac{1}{a+j} & \text{if } b = a+1.
			\end{cases}
		\end{equation}
		In particular for $\alpha \in (1,2)$, 
		\begin{align}
			\sum_{j=1}^{x-1} (x-j) \frac{\Gamma(j+\alpha-1)}{\Gamma(j+1)} =\frac{ x \Gamma(x+\alpha) - x\Gamma(\alpha+1) \Gamma(x+1)}{\alpha (\alpha -1 ) \Gamma(x+1)}.	\label{eq:expectation bock counting jumps}
		\end{align}
	\end{lem}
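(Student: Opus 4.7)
The plan is to prove the general identity \eqref{eq:Gamma.frac.sum} by telescoping and then deduce \eqref{eq:expectation bock counting jumps} by splitting $x-j = x - j$ and applying the general identity twice.

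\textbf{Telescoping step.} Define $F(j) := \Gamma(j+a)/\Gamma(j+b-1)$. Using $\Gamma(z+1)=z\Gamma(z)$ twice,
\[
F(j+1) - F(j)
= \frac{(j+a)\Gamma(j+a)}{(j+b-1)\Gamma(j+b-1)} - \frac{\Gamma(j+a)}{\Gamma(j+b-1)}
= (a-b+1) \, \frac{\Gamma(j+a)}{\Gamma(j+b)}.
\]
When $b \neq a+1$, summing from $j=1$ to $n$ gives
\[
\sum_{j=1}^n \frac{\Gamma(j+a)}{\Gamma(j+b)} = \frac{1}{a-b+1}\bigl(F(n+1) - F(1)\bigr) = \frac{1}{a-b+1}\Bigl(\frac{\Gamma(n+a+1)}{\Gamma(n+b)} - \frac{\Gamma(a+1)}{\Gamma(b)}\Bigr),
\]
with the convention $1/\Gamma(0)=0$ accounting for the indicator $\mathbbm{1}_{b \neq 0}$. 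When $b=a+1$, the summand collapses to $1/(j+a)$ and the identity is immediate.

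\textbf{Deducing \eqref{eq:expectation bock counting jumps}.} Writing $(x-j) = x - j$ and using $j/\Gamma(j+1) = 1/\Gamma(j)$, I would split the sum as
\[
\sum_{j=1}^{x-1} (x-j) \frac{\Gamma(j+\alpha-1)}{\Gamma(j+1)}
= x\, S_1 - S_2, \qquad
S_1 := \sum_{j=1}^{x-1} \frac{\Gamma(j+\alpha-1)}{\Gamma(j+1)}, \quad
S_2 := \sum_{j=1}^{x-1} \frac{\Gamma(j+\alpha-1)}{\Gamma(j)}.
\]
Applying \eqref{eq:Gamma.frac.sum} with $(a,b)=(\alpha-1,1)$ gives $S_1 = \frac{1}{\alpha-1}\bigl(\Gamma(x+\alpha-1)/\Gamma(x) - \Gamma(\alpha)\bigr)$, and with $(a,b)=(\alpha-1,0)$ (where the boundary term vanishes) gives $S_2 = \frac{1}{\alpha} \Gamma(x+\alpha-1)/\Gamma(x-1)$.

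\textbf{Final simplification.} Setting $T := \Gamma(x+\alpha-1)/\Gamma(x)$ and using $\Gamma(x) = (x-1)\Gamma(x-1)$ yields $S_2 = \frac{x-1}{\alpha} T$, so
\[
xS_1 - S_2 = \Bigl(\frac{x}{\alpha-1} - \frac{x-1}{\alpha}\Bigr) T - \frac{x\Gamma(\alpha)}{\alpha-1}
= \frac{x+\alpha-1}{\alpha(\alpha-1)} T - \frac{x\Gamma(\alpha)}{\alpha-1}.
\]
Then $(x+\alpha-1)T = \Gamma(x+\alpha)/\Gamma(x) = x\,\Gamma(x+\alpha)/\Gamma(x+1)$, and after collecting terms over the common denominator $\alpha(\alpha-1)\Gamma(x+1)$ and using $\alpha\Gamma(\alpha)=\Gamma(\alpha+1)$ one obtains exactly the right-hand side of \eqref{eq:expectation bock counting jumps}.

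The only potential obstacle is keeping track of the boundary term when $b=0$, where $\Gamma(b)$ diverges and the corresponding contribution drops out; this is precisely why the indicator $\mathbbm{1}_{b\neq 0}$ appears in \eqref{eq:Gamma.frac.sum}. Otherwise, everything is bookkeeping around the telescoping identity for $F(j)$.
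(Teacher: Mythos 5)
Your proposal is correct and follows essentially the same route as the paper: the same telescoping identity $F(j+1)-F(j)=(a-b+1)\Gamma(j+a)/\Gamma(j+b)$ for the general sum, followed by the same split $x-j = x - j$ and two applications of \eqref{eq:Gamma.frac.sum} with $(a,b)=(\alpha-1,1)$ and $(\alpha-1,0)$. The only cosmetic difference is that you absorb the $b=0$ boundary term via the convention $1/\Gamma(0)=0$, whereas the paper handles the $j=1$ term of that case separately; both are fine.
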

	\begin{proof}
		The case $b = a+1$ follows from the functional equation of the Gamma function, in particular
		$\Gamma(a+j+1)=(a+j)\Gamma(a+j)$.
		In order to verify the case $b \neq a+1$ note that then
		\begin{align*}
			\frac{1}{a-b+1} \Big( \frac{\Gamma(j+1+a)}{\Gamma(j+b)} - \frac{\Gamma(j+a)}{\Gamma(j-1+b)} \Big)
			&  = \frac{1}{a-b+1} \Big( \frac{(j+a) \Gamma(j+a)}{\Gamma(j+b)}
			- \frac{(b+j-1)\Gamma(j+a)}{(b+j-1)\Gamma(j-1+b)}\Big) \\
			& = \frac{1}{a-b+1} \frac{\Gamma(j+a)}{\Gamma(j+b)}\big( (a+j) - (b+j-1)\big) = \frac{\Gamma(j+a)}{\Gamma(j+b)},
		\end{align*}
		for $j \in \N$ if $b\neq0$ and for $j\in \{2,3,\dots\}$ if $b=0$.
		Summing this over $j=1,2,\dots,n$ gives 
		\eqref{eq:Gamma.frac.sum} in the case $b\neq 0$. In case $b=0$, summing over $j=2,3,\dots,n$
		yields
		\begin{align*}
			\sum_{j=1}^n \frac{\Gamma(j+a)}{\Gamma(j)}
			& = \Gamma(a+1) + \sum_{j=2}^n \frac{\Gamma(j+a)}{\Gamma(j)} \\
			& = \Gamma(a+1) + \frac{1}{a+1} \Big( \frac{\Gamma(n+1+a)}{\Gamma(n)} - \frac{\Gamma(2+a)}{\Gamma(1)}\Big)
			= \frac{\Gamma(n+1+a)}{(a+1)\Gamma(n)},
		\end{align*}
		completing the proof of \eqref{eq:Gamma.frac.sum}. 
		Finally, note that by applying \eqref{eq:Gamma.frac.sum} two times
		\begin{align}
			\sum_{j=1}^{x-1} (x-j) \frac{\Gamma(j+\alpha-1)}{\Gamma(j+1)} &= x \sum_{j=1}^{x-1} \frac{\Gamma(j+\alpha-1)}{\Gamma(j+1)} - \sum_{j=1}^{x-1} \frac{\Gamma(j+\alpha-1)}{\Gamma(j)} \\
			&= \frac{x}{\alpha -1} \left( \frac{\Gamma(x+\alpha-1)}{\Gamma(x)} -\Gamma(\alpha) \right) -  \frac{1}{\alpha} \frac{\Gamma(x+\alpha-1)}{\Gamma(x-1)} \\
			&= \frac{ x \Gamma(x+\alpha) - x\Gamma(\alpha+1) \Gamma(x+1)}{\alpha (\alpha -1 ) \Gamma(x+1)}.	
		\end{align}
	\end{proof}
	\begin{lem}\label{Lem:identity Gamma two terms}
		For $x \in \N$ and $\alpha \in (1,2)$ the following identity holds
		\begin{align*}
			\qquad	 \sum_{y=1}^{x-1} \frac{\Gamma(y+\alpha-1)}{\Gamma(y)} \frac{\Gamma(x-y-\alpha+1)}{\Gamma(x-y+2)}= \frac{ \Gamma(2-\alpha)}{\alpha} \frac{( x-1)\Gamma(x+\alpha-1) }{\Gamma(x+1)}.
		\end{align*}
		
	\end{lem}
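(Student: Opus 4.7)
The plan is to recognize the left-hand side as a discrete convolution and evaluate it through generating functions.  Setting $a_y := \Gamma(y+\alpha-1)/\Gamma(y)$ for $y \ge 1$ and $b_j := \Gamma(j+1-\alpha)/\Gamma(j+2)$ for $j \ge 1$, the change of variable $j = x-y$ rewrites the left-hand side as $\sum_{y+j=x,\, y,j\ge 1} a_y b_j$, i.e.\ as the coefficient of $z^x$ in $A(z) B(z)$ where $A(z) := \sum_{y \ge 1} a_y z^y$ and $B(z) := \sum_{j \ge 1} b_j z^j$.

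First I compute $A(z)$.  The generalized binomial series
$\sum_{y \ge 0} \frac{\Gamma(y+\alpha)}{\Gamma(\alpha)\Gamma(y+1)} z^y = (1-z)^{-\alpha}$,
after an index shift, gives $A(z) = z \Gamma(\alpha) (1-z)^{-\alpha}$.
For $B(z)$, I add in the $j=0$ term $b_0 = \Gamma(1-\alpha)$ to form $\widetilde{B}(z) = \sum_{j \ge 0} b_j z^j$.  Writing $1/\Gamma(j+2) = 1/((j+1)\Gamma(j+1))$ and integrating the binomial series termwise,
\begin{align}
z \widetilde{B}(z) = \int_0^z \sum_{j \ge 0} \frac{\Gamma(j+1-\alpha)}{\Gamma(j+1)} w^j \, dw = \Gamma(1-\alpha) \int_0^z (1-w)^{\alpha-1} \, dw = \frac{\Gamma(1-\alpha)}{\alpha}\bigl(1 - (1-z)^\alpha\bigr),
\end{align}
so $B(z) = \widetilde{B}(z) - \Gamma(1-\alpha)$.

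Multiplying, the $\Gamma(1-\alpha)$-term cancels cleanly and yields
\begin{align}
A(z) B(z) = \frac{\Gamma(\alpha) \Gamma(1-\alpha)}{\alpha} \Bigl[ (1-z)^{-\alpha} - 1 - \alpha z (1-z)^{-\alpha} \Bigr].
\end{align}
Extracting the coefficient of $z^x$ via $[z^x](1-z)^{-\alpha} = \Gamma(\alpha+x)/(\Gamma(\alpha)\Gamma(x+1))$ gives
\begin{align}
[z^x]\bigl(A(z) B(z)\bigr) = \frac{\Gamma(1-\alpha)}{\alpha} \Bigl( \frac{\Gamma(\alpha+x)}{\Gamma(x+1)} - \alpha \, \frac{\Gamma(\alpha+x-1)}{\Gamma(x)} \Bigr).
\end{align}
Factoring $\Gamma(\alpha+x-1)/\Gamma(x+1)$ out of the bracket using $\Gamma(\alpha+x) = (\alpha+x-1)\Gamma(\alpha+x-1)$ and $\Gamma(x+1) = x \Gamma(x)$, the bracket collapses to $(\alpha + x - 1 - \alpha x) = (1-\alpha)(x-1)$, and finally the functional equation $(1-\alpha)\Gamma(1-\alpha) = \Gamma(2-\alpha)$ turns the prefactor into $\Gamma(2-\alpha)/\alpha$, reproducing the right-hand side of the lemma.

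No real obstacle arises beyond careful bookkeeping: the only subtlety is that $\Gamma(1-\alpha)$ is only well-defined because $\alpha \in (1,2)$ keeps $1-\alpha$ away from non-positive integers, and one must remember to subtract the $j=0$ contribution when passing from $\widetilde{B}$ to $B$.  Once the algebra of the bracket is handled, the identity drops out.
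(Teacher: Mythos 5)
Your proof is correct, and it takes a genuinely different route from the paper's. You treat the left-hand side as a Cauchy product: with $a_y = \Gamma(y+\alpha-1)/\Gamma(y)$ and $b_j=\Gamma(j+1-\alpha)/\Gamma(j+2)$ the sum is $[z^x]\bigl(A(z)B(z)\bigr)$, and both generating functions have clean closed forms coming from the generalized binomial series, namely $A(z)=z\Gamma(\alpha)(1-z)^{-\alpha}$ and $B(z)=\tfrac{\Gamma(1-\alpha)}{\alpha z}\bigl(1-(1-z)^\alpha\bigr)-\Gamma(1-\alpha)$; extracting the coefficient and simplifying with $(1-\alpha)\Gamma(1-\alpha)=\Gamma(2-\alpha)$ gives exactly the claimed right-hand side. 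I checked the algebra: the product, the coefficient extraction, and the collapse of the bracket to $(1-\alpha)(x-1)$ are all correct, and since $a_y\sim y^{\alpha-1}$ and $b_j\sim j^{-\alpha-1}$ both series converge on $|z|<1$, so the formal manipulations are justified. The paper instead extends the sum to $y=1,\dots,x+1$, recognizes the total as $\frac{\Gamma(\alpha)\Gamma(x-\alpha)}{\Gamma(x+1)}\,{}_2F_1(\alpha,-x;-x+1+\alpha;1)$, which vanishes by a classical hypergeometric evaluation at unit argument (citing Abramowitz--Stegun 15.1.20), and then computes the two boundary terms $a_x+a_{x+1}$ explicitly. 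Your argument buys self-containedness: it needs only the binomial series and termwise integration, with no appeal to a tabulated ${}_2F_1$ identity; the paper's version is shorter on the page but outsources the key cancellation to the hypergeometric literature. Either way, the identity holds, and your remark that $\alpha\in(1,2)$ keeps $\Gamma(1-\alpha)$ finite is the right thing to flag.
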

	
	\begin{proof}
		Let
		\[a_y:= \frac{\Gamma(y+\alpha-1)}{\Gamma(y)} \frac{\Gamma(x-y-\alpha+1)}{\Gamma(x-y+2)}.\]
		We show that $\sum_{y=1}^{x+1} a_y=0$. This implies \blue{the claim} since
		\begin{align*}
			a_x+a_{x+1}&= \frac{\Gamma(x+\alpha-1)}{\Gamma(x)} \frac{\Gamma(1-\alpha)}{\Gamma(2)} + \frac{\Gamma(x+\alpha)}{\Gamma(x+1)} \frac{\Gamma(-\alpha)}{\Gamma(1)} \\
			&= \frac{-\alpha x \Gamma(x+\alpha-1) \Gamma(1-\alpha)+\Gamma(x+\alpha)\Gamma(1-\alpha)}{-\alpha \Gamma(x+1)} 
			\\
			&= \frac{\Gamma(x+\alpha-1) \Gamma(1-\alpha)( x+\alpha-1-\alpha x )}{-\alpha \Gamma(x+1)} 
			\\
			&= - \frac{ \Gamma(2-\alpha)}{\alpha} \frac{( x-1)\Gamma(x+\alpha-1) }{\Gamma(x+1)}.
		\end{align*}	
		Let $x^{\overline n}:=x(x+1)\cdots (x+n-1)$ and $x^{\underline n}:=x(x-1)\cdots (x-n+1)$ denote the rising and respectively falling factorials. Then 
		\begin{align*}
			& \Gamma(x-y-\alpha) =\frac {\Gamma(x-\alpha)}{(x-1-\alpha)^{\underline y}}= \frac {\Gamma(x-\alpha)}{(-1)^{y}(\alpha+1-x)^{\overline y}},\\
			&\Gamma(x-y+1) =\frac {\Gamma(x+1)}{x^{\underline y}}= \frac {\Gamma(x+1)}{(-1)^{y}(-x)^{\overline y}},
			\\
			& \Gamma(y+\alpha) =\alpha^{\overline y} \Gamma(\alpha)
		\end{align*}	
		and by \cite[15.1.20]{Abramowitz1972}
		\begin{align*}
			\sum_{y=1}^{x+1} a_y = \sum_{y=0}^x \frac{\Gamma(y+\alpha)}{\Gamma(y+1)} \frac{\Gamma(x-y-\alpha)}{\Gamma(x-y+1)}
			&= \frac{\Gamma(\alpha)\Gamma(x-\alpha)}{\Gamma(x+1)} \sum_{y=0}^x \frac 1 {y!} \frac{\alpha^{\overline y}(-x)^{\overline y}}{(-x+1+\alpha)^{\overline y}}
			\\
			& =\frac{\Gamma(\alpha)\Gamma(x-\alpha)}{\Gamma(x+1)} {}_2F_1 (\alpha, -x,-x+1+\alpha;1)
			\\
			&=0,
		\end{align*}
		where ${}_2F_1$ denotes the hypergeometric function.
	\end{proof}
	
	
	

	\subsection{Proof of formulas from Subsection~\ref{sec:neutral}}
	\label{sect:proofs.rem:neutral}
	
	For a rough idea why $N^{\alpha-1}$ is the correct order of magnitude
	of \eqref{eq:neutral1}, which also highlights the role of the
	singularity of $p^{-2} \Lambda(dp)$ at $0$, we can argue as follows:
	For a given individual among the $N$ many, the rate at which it
	participates in some non-trivial event (i.e., its own $p$-coin toss is
	successful and there is at least one other individual whose $p$-coin
	toss is successful) is given by, ignoring constants,
	\begin{align*}
		& \int_0^1 p \big( 1 - (1-p)^{N-1}\big) \frac{p^{1-\alpha}(1-p)^{\alpha-1}}{p^2} \,dp\\
		&= \int_0^{1/N} p \big( 1 - (1-p)^{N-1}\big) \frac{p^{1-\alpha}(1-p)^{\alpha-1}}{p^2} dp + \int_{1/N}^1 p \big( 1 - (1-p)^{N-1}\big) \frac{p^{1-\alpha}(1-p)^{\alpha-1}}{p^2} dp \\
		& \approx \int_0^{1/N} p(N-1)p \frac{p^{1-\alpha}(1-p)^{\alpha-1}}{p^2} \,dp + 
		\int_{1/N}^1 p \frac{p^{1-\alpha}(1-p)^{\alpha-1}}{p^2} \,dp \\
		& \approx N \int_0^{1/N} p^{1-\alpha} \, dp + \int_{1/N}^1 p^{-\alpha} \, dp \approx N^{\alpha-1}.
	\end{align*}
	
	\paragraph{Proof of \eqref{eq:neutral1}, \eqref{eq:neutral.k} and \eqref{eq:neutral.0}.}
	We observe that
	\begin{align}
		& \int_{(0,1]} p \big( 1 - (1-p)^{N-1}\big) \frac{1}{p^2} \Lambda(dp)
		= \sum_{k=2}^N \int_{(0,1]} \binom{N-1}{k-1} p^{k} (1-p)^{N-k}\frac{1}{p^2} \Lambda(dp) \notag \\
		& = \sum_{k=2}^N \frac{k}{N} \binom{N}{k} \int_{(0,1]} p^{k} (1-p)^{N-k}\frac{1}{p^2} \Lambda(dp)
		= \sum_{k=2}^N \frac{k}{N} q(N, N-k+1) \notag \\
		& = \frac{1}{N} \bigg( \sum_{k=2}^N (k-1) q(N, N-k+1) \bigg)
		+ \frac{1}{N} \bigg( \sum_{k=2}^N q(N, N-k+1) \bigg) \notag \\
		& = \frac{1}{N} \frac{ N \Gamma(N+\alpha) - N\Gamma(\alpha+1) \Gamma(N+1)}{ (\alpha -1 ) \Gamma(\alpha +1) \Gamma(N+1)} + \frac{1}{\Gamma(\alpha+1)} \frac{(N-1) \Gamma(N+\alpha-1)}{\Gamma(N+1)} \label{eq:neutral1.aux1} \\
		& = \frac{1}{(\alpha-1)\Gamma(\alpha+1)} \frac{\Gamma(N+\alpha)}{\Gamma(N+1)}
		- \frac{1}{(\alpha-1)}
		+ \frac{1}{\Gamma(\alpha+1)} \frac{\Gamma(N+\alpha-1)}{\Gamma(N)}
		- \frac{1}{\Gamma(\alpha+1)} \frac{\Gamma(N+\alpha-1)}{\Gamma(N+1)} \notag \\
		& \sim \frac{\alpha}{(\alpha-1)\Gamma(\alpha+1)} N^{\alpha-1},
		\qquad \text{for } N \to\infty, 
	\end{align}
	where we used in \eqref{eq:neutral1.aux1} the identities \eqref{first moment gen} and \blue{that by Lemma~\ref{lem:jump rates block counting}} 
	\begin{align}
		\frac{1}{N} \sum_{k=2}^N q(N, N-k+1)
		& = \frac{1}{N} \sum_{k=2}^N \frac{N}{\Gamma(2-\alpha)\Gamma(\alpha)}
		\frac{\Gamma(N-k+\alpha)}{\Gamma(N-k+1)} \frac{\Gamma(k-\alpha)}{\Gamma(k+1)} \notag \\
		& = \frac{1}{\Gamma(2-\alpha)\Gamma(\alpha)} \sum_{j=1}^{N-1}
		\frac{\Gamma(j+\alpha-1)}{\Gamma(j)} \frac{\Gamma(N-j-\alpha+1)}{\Gamma(N-j+2)} \notag \\
		& = \frac{1}{\Gamma(2-\alpha)\Gamma(\alpha)}
		\frac{(N-1) \Gamma(2-\alpha) \Gamma(N+\alpha-1)}{\alpha \Gamma(N+1)} \\
		&= \frac{1}{\Gamma(\alpha+1)} \frac{(N-1) \Gamma(N+\alpha-1)}{\Gamma(N+1)};
	\end{align}
	here, the third equality is from Lemma~\ref{Lem:identity Gamma two terms} (see also e.g.\ \cite[Appendix]{HM13}).
	
	For \eqref{eq:neutral.k} note that
	\begin{align}
		& \frac{1}{k} \binom{N-1}{k-1} \int_{(0,1]} p \, p^{k-1} (1-p)^{N-k}\frac{1}{p^2} \Lambda(dp)
		= \frac{1}{N} \binom{N}{k} \int_{(0,1]} p^{k} (1-p)^{N-k}\frac{1}{p^2} \Lambda(dp) \\
		&= \frac{1}{N} q(N,N-k+1) \notag
		= \frac{1}{N} \frac{N}{\Gamma(2-\alpha)\Gamma(\alpha)}
		\frac{\Gamma(N-k+\alpha)}{\Gamma(N-k+1)} \frac{\Gamma(k-\alpha)}{\Gamma(k+1)},
	\end{align}
	where we used \eqref{eq:q(x,y)} from Lemma~\ref{lem:jump rates block counting}
	in the second line. 
	
	For \eqref{eq:neutral.0} write
	\begin{align}
		& \sum_{k=2}^N 
		\frac{k-1}{k} \binom{N-1}{k-1} \int_{(0,1]} p \, p^{k-1} (1-p)^{N-k}\frac{1}{p^2} \Lambda(dp)
		= \sum_{k=2}^N \frac{k-1}{N} \binom{N}{k} \int_{(0,1]} p^{k} (1-p)^{N-k}\frac{1}{p^2} \Lambda(dp)
		\notag \\
		& = \frac{1}{N} \sum_{k=2}^N (k-1) q(N,N-k+1)
		= \frac{1}{N} \frac{ N \Gamma(N+\alpha) - N\Gamma(\alpha+1) \Gamma(N+1)}{ (\alpha -1 ) \Gamma(\alpha +1) \Gamma(N+1)}
		\notag \\
		& = \frac{1}{(\alpha -1 ) \Gamma(\alpha +1)} \frac{\Gamma(N+\alpha)}{\Gamma(N+1)}
		- \frac{1}{\alpha -1}
		\sim \frac{1}{(\alpha -1 ) \Gamma(\alpha +1)} N^{\alpha-1},
		\qquad \text{for } N \to\infty,
	\end{align}
	where we used \eqref{first moment gen} in the third equality.
	\hfill $\qed$
	
	\section*{Acknowledgements}
We would like to thank two anonymous reviewers for their friendly and helpful comments which corrected inconsistencies and improved the presentation.
		
	\bibliographystyle{alpha}
	\bibliography{bibfile}

\end{document}